\newtheorem{theorem}{Theorem}[section]
\newtheorem{proposition}[theorem]{Proposition}
\newtheorem{lemma}[theorem]{Lemma}
\newtheorem{definition}{Definition}[section]
\theoremstyle{remark}
\newtheorem{remark}[theorem]{Remark}
\theoremstyle{example}
\newtheorem{example}[theorem]{Example}
\numberwithin{equation}{section}
\begin{document}

%\end{document}

\begin{frontmatter}

%
% ORCIDs Jan:  0000-0002-7972-6183, Andrew: 0000-0002-5902-1522

\title{Nonlinear Boundary Conditions for Initial Boundary Value Problems with Applications in Computational Fluid Dynamics}

% or include affiliations in footnotes:
\author[sweden,southafrica]{Jan Nordstr\"{o}m}%\corref{firstcorrespondingauthor}}
%\cortext[firstcorrespondingauthor]{Corresponding author}
%\ead{jan.nordstrom@liu.se}
%\author[sweden]{Andrew R.~Winters\corref{secondcorrespondingauthor}}
\cortext[secondcorrespondingauthor]{Corresponding author}
\ead{jan.nordstrom@liu.se}
\address[sweden]{Department of Mathematics, Applied Mathematics, Link\"{o}ping University, SE-581 83 Link\"{o}ping, Sweden}
\address[southafrica]{Department of Mathematics and Applied Mathematics, University of Johannesburg, P.O. Box 524, Auckland Park 2006, Johannesburg, South Africa}

%  abstract
\begin{abstract}
We derive new boundary conditions and implementation procedures for nonlinear initial boundary value problems (IBVPs) with non-zero boundary data that lead to bounded solutions. The new boundary procedure is applied to nonlinear IBVPs in skew-symmetric form, including dissipative terms.
%such as in the Navier-Stokes equations. 
The complete procedure has two main ingredients. In the first part (published in \cite{nordstrom2022linear-nonlinear,Nordstrom2022_Skew_Euler}), the energy and entropy rate in terms of a surface integral with boundary terms was produced for problems with first derivatives. 
In this second part we complement it by adding second derivative terms and new nonlinear boundary procedures leading for boundary conditions with non-zero data. \textcolor{red}{The new nonlinear boundary procedure generalise the well known characteristic boundary procedure for linear problems to the nonlinear setting.}

%Both strong and weak imposition of the nonlinear boundary conditions with non-zero boundary data are considered,  and we prove that the solution is bounded. 
\textcolor{green}{To introduce the procedure, a skew-symmetric scalar IBVP encompassing the linear advection equation and Burgers equation is analysed. Once the continuous analysis is done, we show that energy stable nonlinear discrete approximations follow by using summation-by-parts operators combined with weak boundary conditions. The scalar analysis is subsequently repeated for general nonlinear systems of equations.
%We stress that the second part  in itself does not depend on the first part. It only requires that an energy rate in terms a surface integral with boundary terms exist.
Finally, the new boundary procedure is applied to four important IBVPs in computational fluid dynamics: the incompressible Euler and Navier-Stokes, the shallow water and the compressible Euler equations.}
%Finally we show that energy stable nonlinear discrete approximations follow by using summation-by-parts operators combined with weak boundary conditions.
\end{abstract}

% keywords
\begin{keyword}
Nonlinear boundary conditions \sep Navier-Stokes equations \sep Euler equations \sep shallow water equations  \sep  energy and entropy stability  \sep summation-by-parts
\end{keyword}

% REQUIRED
%\begin{AMS}
%   35L50, 35L60
%\end{AMS}

\end{frontmatter}

%\end{document}

%%%%%%%%%%%%%%%%%%%%%%%%%%%%%%%%%%%%%%%%%%%%%%%%%%%%%%%%%%%%%%%%%%%%%%%%
\section{Introduction}
%%%%%%%%%%%%%%%%%%%%%%%%%%%%%%%%%%%%%%%%%%%%%%%%%%%%%%%%%%%%%%%%%%%%%%%%
\textcolor{red}{A proper treatment of boundary conditions is a crucial and often the most difficult aspect for bounding the solution in initial boundary value problems (IBVPs). For nonlinear IBVPs this aspect becomes even more crucial since nonlinearities have a tendency to enhance growth of instabilities. Generally speaking one finds two historical avenues  to obtain estimates of solutions IBVPs. The first one employs the energy method \cite{kreiss1970,kreiss1989initial,Gustafsson1978,gustafsson1995time,oliger1978,nordstrom2005,nordstrom2019} while the second one uses the entropy stability theory \cite{tadmor1984,Tadmor1987,Tadmor2003,godunov1961interesting,volpert1967,kruzkov1970,dafermos1973entropy,lax1973,harten1983}. Traditionally, the energy method has been applied to linearised versions of systems of hyperbolic equations in order to develop boundary treatments that lead to energy estimates. In practice, these boundary conditions are needed to develop energy stable numerical approximations that weakly impose boundary information, e.g., through penalty terms \cite{carpenter1994time,nordstrom_roadmap} or numerical flux functions \cite{hindenlang2019,leveque1998,ESDGSEM2D_paper,xing2014}. The entropy method has often been applied to nonlinear hyperbolic systems on domains without boundary conditions (as for periodic boundary conditions or infinite domains). However, entropy stability theory has also been used for the development of boundary conditions \cite{dalcin2019conservative,dubois1988,hindenlang2019,parsani2015entropy,svard2012,svard2014,svard2021entropy,svar2018,svard2021,chan2022,Gjesteland2022}, but almost exclusively for solid walls (or glancing boundaries).
Solid wall boundary conditions are notoriously simple and straightforward to apply and implement due to their homogeneous nature, i.e. no external non-zero data must be considered. In contrast to previous investigations, we will for the first time (to the best of our knowledge) treat the general case with non-homogeneous nonlinear boundary conditions and derive estimates of the solution in terms of given data.}

This paper constitutes the second part of the stability theory for nonlinear IBVPs on skew-symmetric form initiated in \cite{nordstrom2022linear-nonlinear,Nordstrom2022_Skew_Euler}. In the first part, energy rates in terms of surface integrals with boundary terms were derived using skew-symmetric formulations.  Here in the second part we add on second derivative terms and develop a nonlinear boundary procedure which controls the boundary terms. In fact, we generalise the well known linear characteristic boundary procedure  \cite{kreiss1970,MR436612} and prove that we can bound both strong and weak boundary conditions in terms of given external data. We stress that this second part (the new boundary procedure) in itself does not depend on the first part. It can be used by other preferred formulations of nonlinear IBVPs, as long as similar settings are obtained. Most of the analysis in this second part is limited to IBVPs involving first derivatives (essentially hyperbolic problems). However, the new nonlinear boundary procedure can also be used to bound dissipative IBVPs involving second derivatives  similar to what has been done in the linear case \cite{nordstrom2005,MR1339182,MR1669660}. We show how to formally extend the analysis for the first derivative case to the one including second derivatives (essentially parabolic and incompletely parabolic problems).

%This theory is valid for both linear and nonlinear primal and dual problems. It is direct, easy to understand and leads to $L_2 $ estimates. The requirement for an energy and entropy bound is that  {\it i)} a skew-symmetric form of the governing equations exist and {\it ii)} energy bounding boundary conditions (BCs) are available. In  \cite{nordstrom2022linear-nonlinear, Nordstrom2022_Skew_Euler} we focused on the skew-symmetric property {\it assuming}  that boundary conditions leading to an energy bound were available. In this article we derive these BCs explicitly, and show how to implement them in a provable stable way. 
We exemplify the procedure on four important IBVPs in computational fluid dynamics (CFD):  the incompressible Euler equations (IEEs) and skew-symmetric formulations of the shallow water equations (SWEs) and the compressible Euler equations (CEEs) and the incompressible Navier-Stokes equations (INSEs).
It was shown in \cite{nordstrom2022linear-nonlinear} that the original form of the velocity-divergence form of the IEEs equations had the required skew-symmetric form and that it could be derived for the SWEs. In  \cite{Nordstrom2022_Skew_Euler} we showed that also the CEEs could be transformed to skew-symmetric form with a quadratic mathematical (or generalised) entropy by using specific variables involving square roots of density and pressure, also used in \cite{Petr2007,Rozema2014,Sesterhenn2014,Halpern2018}. 
%Similar attempts, but rather aiming for a symmetrisation was done already in \cite{Friedrichs,tadmor1984,mock1980}.
%Once the skew-symmetric formulation is obtained, an energy and entropy bound follows by applying integration-by-parts (IBP) and imposing proper boundary conditions.  
The continuous procedure in \cite{nordstrom2022linear-nonlinear,Nordstrom2022_Skew_Euler}), was reused by discretising the equations in space using summation-by-parts (SBP) operators \cite{svard2014review,fernandez2014review} which discretely mimic the integration-by-parts (IBP) procedure used in the continuous analysis. A provably stable nonlinear approximation was obtained by a priori {\it assuming}  that the boundary treatment was dissipative. In this paper we close that particular knowledge gap and develop provably stable boundary procedures for nonlinear systems of equation with non-zero boundary data.

\textcolor{green}{Using lifting approaches \cite{Arnold20011749,nordstrom_roadmap} and proper continuous boundary conditions for the IBVP, it is straightforward to apply results (formulations that lead to energy conservation and bounds) from the continuous analysis and develop stable numerical schemes. The only requirement is that one can formulate the numerical procedure on summation-by-parts (SBP) form with weak boundary conditions on simultaneous approximation term (SAT) form \cite{svard2014review,fernandez2014review} or equivalently through numerical flux functions \cite{kopriva2021}. This SBP-SAT procedure enables research groups using different numerical techniques  such as finite difference \cite{nordstrom2009stable,svard2007stable,svard2008stable}, finite volume \cite{nordstrom2012weak,nordstrom2003finite}, spectral elements \cite{carpenter2014entropy,carpenter1996spectral}, flux reconstruction \cite{castonguay2013energy,huynh2007flux,huynh2007flux}, discontinuous Galerkin \cite{gassner2013skew,hesthaven1996stable,kopriva2021} and continuous Galerkin schemes \cite{abgrall2020analysis,abgrall2021analysis} to develop stable nonlinear schemes in a systematic way.}

\textcolor{blue}{We close the paper by discussing some open questions for boundary conditions to systems of nonlinear IBVPs. In this paper we derive boundary conditions that lead to energy estimates and subsequently to energy stability using the SBP-SAT technique. For linear systems of IBVPs, the character of the boundary term is given by external data and energy boundedness leads more or less directly to both uniqueness and existence if a minimal number of boundary conditions are employed in the bound \cite {nordstrom2020}. In the nonlinear case this is more complicated since multiple solution dependent forms of the boundary term may exist  \cite{nordstrom2022linear-nonlinear,Nordstrom2022_Skew_Euler}. This has implications for the number of boundary conditions to apply, and also for uniqueness and existence.}

The paper is organised as follows: \textcolor{green}{We start by going through the fundamental parts of the paper for a scalar one-dimensional IBVP in Section \ref{sec:illustration_theory} which provide an extended introduction and a roadmap for the remaining part of the paper in a simplified setting. The more easily accessible scalar analysis is related to the analysis of the more demanding nonlinear system analysis in Sections 3-5.} In Section~\ref{sec:theory} we reiterate and complement the main content in the previously published first part of procedure \cite{nordstrom2022linear-nonlinear,Nordstrom2022_Skew_Euler}. The new second part: how to formulate and impose general boundary conditions with non-zero data for nonlinear systems of equations such that a bound is obtained, is presented in Section~\ref{BC_theory}.  In Section~\ref{numerics} we show that the energy bounded continuous formulation lead to  nonlinear stability of an SBP-SAT based scheme, including non-zero boundary data. Section~\ref{Examples} include examples of the new general nonlinear boundary procedures  applied to important IBVPs in CFD. Explicit examples of boundary conditions and implementation procedures are given for the IEEs, the SWEs, the CEEs and the INSEs. \textcolor{blue}{We discussing some open questions related to boundary conditions for nonlinear IBVPs in Section \ref{sec:open_q}.} A summary is provided in Section~\ref{sec:conclusion}.

\textcolor{red}{\section{Part 0: Illustration of the main concepts in a scalar setting: a roadmap for the paper}\label{sec:illustration_theory}
%%%%%%%%%%%%%%%%%%%%%%%%%%%%%%%%%%%%%%%%%%%%%%%%%%%%%%%%%%%%%%%%%%%%%%%%
As a prolonged introduction of this paper, we will present the fundamental parts of the development for a scalar one-dimensional IBVP. The scalar analysis avoid most of the technical difficulties with nonlinear systems of equation in multiple dimensions, but retain and clarify the fundamental ones. In particular we seek to highlight the differences between linear and nonlinear boundary treatment and  present  the energy analysis.
 %for both the continuous and discrete setting. 
 We focus on the governing IBVP and its energy boundedness and thereafter implemented the finding in an SBP-SAT scheme that lead to nonlinear energy stability. The scalar analysis in this section is subsequently repeated for fully nonlinear systems of equations with significantly more technical difficulties in the upcoming sections. The three subsections in this scalar section has the same titles as the three following sections for the nonlinear systems of equations in order to simplify comparisons.
%%%%%%%%%%%%%%%%%%%%%%%%%%%%%%%%%%%%%%%%%%%%%%%%%%%%%%%%%%%%%%%%%%%%%%%%
\subsection{Part 1: Producing energy and entropy rates}\label{sec:theory_il}
%%%%%%%%%%%%%%%%%%%%%%%%%%%%%%%%%%%%%%%%%%%%%%%%%%%%%%%%%%%%%%%%%%%%%%%%
We considering the following IBVP with the governing on skew-symmetric form
%\cite{nordstrom2022linear-nonlinear,Nordstrom2022_Skew_Euler}:
\begin{equation}\label{eq:nonlin_il}
u_t + (a u)_x+au_x=0,  \quad t \geq 0,  \quad  x  \geq 0, 
\end{equation}
augmented with the initial condition $u(x,0)=f(x)$ and  the non-homogeneous general boundary condition
\begin{equation}\label{eq:nonlin_BC_il}
b(u) = g(t),  \quad t \geq 0,  \quad  x=0.
\end{equation}
In (\ref{eq:nonlin_il}),  $a$ is a general function which is solution dependent if we consider a nonlinear problem. Our prime examples are the nonlinear Burgers equation where $a=u(x,t)/3$ is solution dependent and the advection equation where  $a=c(x,t)/2$ and $c$ is a given function of $x,t$. The boundary operator $b$ in (\ref{eq:nonlin_BC_il}) can in similar manner as the governing equation be linear or nonlinear. The initial data $f$,  the boundary data $g$ and (in the linear case) the wave speed  $c(x,t)$ are given external smooth bounded functions which constitute input to the IBVP. 
%To focus on the nonlinearity, we neglected dissipative terms (in the nonlinear system case below they are included).
Furthermore, we also consider $u$ to be smooth. The smoothness in the nonlinear setting could stem from a neglected dissipative term or from considering the IBVP for short times.  We only consider the boundary $x=0$ and hence the initial data $f$ has compact support and we assume that $u \rightarrow 0$ as $x \rightarrow  \infty$. 
\begin{remark}\label{explain_1} 
It is important to note that  $a=u/3$ is not a priori bounded while $a=c/2$ is.  
\end{remark}
 %\begin{remark}\label{explain}
%Most of the material in Section \ref{sec:energy} and Section \ref{sec:entropy}  have been presented previously in  \cite{nordstrom2022linear-nonlinear,Nordstrom2022_Skew_Euler} except the material related the non-homogeneous boundary conditions. It is included for completeness. Section \ref{BC_theory} describing how to construct and impose nonlinear boundary conditions is completely new.
%\end{remark}
%\subsection{Energy analysis}\label{sec:energy}
%The following three concepts are essential for a proper treatment of the IBVP (\ref{eq:nonlin}).
%Note that (\ref{eq:nonlin}) is nonlinear for $V=U$. 
%Equation (\ref{eq:nonlin}) could e.g. represent the Euler or shallow water equations. 
%In the rest of the paper we will consider the following two concepts.
%%%%%%%%%%%%%%%%%%%%%%%%%%%%%%%%%%%%%%%%%%%%%%%%%%%%%%%%%%%%%%%%%%%%%%%%
%\subsection{SubPart 1 Producing energy and entropy rates}\label{sec:theory_inviscid_il}
%%%%%%%%%%%%%%%%%%%%%%%%%%%%%%%%%%%%%%%%%%%%%%%%%%%%%%%%%%%%%%%%%%%%%%%%
We start by applying the energy method (multiplying the equation with $u$ and integrate-by-parts) to the skew-symmetric equation (\ref{eq:nonlin_il}). 
By integrating the second term and leaving the last one untouched we find 
\begin{equation}\label{eq:boundaryPart1_intermid_il}
\int_0^{\infty} u u_t dx + (a u^2)_{x=0}^{x \rightarrow  \infty} - \int_0^{\infty} u_x (au) dx + \int_0^{\infty} u (a u_x) dx =0.
\end{equation}
By ignoring the right boundary term, using the notation  $\|u\|^2= \int_0^{\infty} u^2 dx$ and observing that the volume terms above vanish (due to the skew-symmetry) we obtain the energy rate 
\begin{equation}\label{eq:boundaryPart1_il}
\frac{1}{2} \frac{d}{dt}\|u\|^2 - (a u^2)_{x=0}=0.  
\end{equation}
Our {\it first observation} is that the skew-symmetric form of equation (\ref{eq:nonlin_il}) leaves the energy rate to be determined by the boundary terms only. (The skew-symmetry for nonlinear systems of IBVPs were discussed in \cite{nordstrom2022linear-nonlinear,Nordstrom2022_Skew_Euler} and will be shortly summarised in Section \ref{sec:theory}). Next, we take $a>0$ in the whole computational domain such that $x=0$ is an inflow boundary that require a boundary condition. 
%No boundary condition is required at the rightfor $a \leq 0$ since in that case  $x=0$ is an outflow boundary that dissipates energy. 
The  {\it second observation} is that specifying only $u$ in (\ref{eq:boundaryPart1_il}) does not necessarily bound the energy. In addition $a$ must be bounded which it is for linear problems but not in general for nonlinear ones. The  {\it second observation} implies that the complete boundary term must be considered when constructing the boundary operator $b(u)$ in the nonlinear case. 
%The construction of nonlinear boundary operators and it's related implementation procedure is the  {\it main topic} of this paper.
\begin{remark}\label{explain_cubic} 
Our two examples illustrate the significance of the {\it second observation}. For the advection equation, $a=c(x,t)/2$ leads to a boundary term quadratic in $u$ multiplied with the bounded externally given coefficient $a$ with a specific sign. Consequently, we have a priori all information upon which to base the boundary treatment. For the Burgers equation where $a=u/3$ we end up with a boundary term cubic in $u$, where the coefficient $a$ is neither bounded nor have a definite sign upon which to base the boundary treatment. The nonlinear boundary procedure must control both the size and the sign of the coefficient.
\end{remark} 
\begin{remark}\label{explain_2} 
In the upcoming analysis for systems of equations, the boundary term is a vector-matrix-vector product of the form $U^T A(U) U$. Controlling the coefficient $a$ in the nonlinear scalar case discussed in Remark \ref{explain_cubic} correspond to controlling the eigenvalues and eigenvectors of the matrix $A(U)$ in the system case.
\end{remark}
%Before we proceed by illustrating the {\it main topic},  
Next, we show that the skew-symmetric form of (\ref{eq:nonlin_il}) also supports an entropy conservation law. By multiplying (\ref{eq:nonlin_il}) with the (entropy) variable $u$ we obtain
\begin{equation}\label{eq:entropy-derail}
 u u_t  +  u(au)_x + (au)u_x =0,
\end{equation}
which by introducing the  entropy $\phi=u^2/2$ and entropy flux  $\psi= au^2$ leads to the entropy conservation law 
\begin{equation}\label{eq:entropy-equation_il}
\phi_t + \psi_x = 0.
\end{equation}
%In (\ref{eq:entropy-equation_il}), $\phi=u^2/2$ is the entropy and $\psi= au^2$ the entropy flux. 
\begin{remark}\label{explain_entropy_burgers}
For Burgers equation, the quadratic entropy $\phi=u^2/2$ and the cubic entropy flux $\psi= u^3/3$  (the so called entropy-entropy flux pair) are well known \cite{Tadmor2003} and illustrates the concept of a mathematical (or generalised) entropy. The mathematical entropy contracts the governing equation (\ref{eq:nonlin_il})  to the conservation form (\ref{eq:entropy-equation_il}). It is not necessarily connected to a thermodynamic entropy. Equations (\ref{eq:nonlin_il}) and (\ref{eq:entropy-equation_il}) reveals that also the total entropy in the domain is bounded with proper boundary conditions. 
%In fact, the energy and total entropy are the same in this case.
\end{remark}
%The content of this subsection is presented  for the fully nonlinear system case in Section \ref{sec:theory}.
%%%%%%%%%%%%%%%%%%%%%%%%%%%%%%%%%%%%%%%%%%%%%%%%%%%%%%%%%%%%%%%%%%%%%%%%
\subsection{Part 2: The nonlinear boundary conditions and weak implementation procedure}\label{BC_theory_il} 
%%%%%%%%%%%%%%%%%%%%%%%%%%%%%%%%%%%%%%%%%%%%%%%%%%%%%%%%%%%%%%%%%%%%%%%%
The final target for this analysis is a provably energy stable numerical implementation which in general require summation-by-parts (SBP) operators and weak boundary conditions implemented using penalty terms (the so called SAT formalism), or equivalently, numerical fluxes. Following \cite{nordstrom_roadmap}, we derive the conditions for energy boundedness in the continuous setting and mimic that numerically by constructing an energy stable SBP-SAT scheme. Equation (\ref{eq:nonlin_il}) augmented with a lifting operator for the boundary condition (\ref{eq:nonlin_BC_il}) is 
\begin{equation}\label{eq:nonlin_lif_il}
u_t + (a u)_x+au_x + l_c(2\sigma(b(u)-g))=0,  \quad t \geq 0,  \quad  x  \geq 0,
\end{equation}
where $\sigma$ is a yet undetermined penalty parameter. The lifting operator $l_c$ for two smooth functions  $\phi, \psi$ satisfies $\int\limits \phi   l_c(\psi) dx = (\phi  \psi)_{x=0}$. The lifting operator mimics the SAT term and enables development of the essential parts of the numerical boundary procedure already in the continuous setting \cite{Arnold20011749,nordstrom_roadmap}. 
%It will be mimicked in the discrete setting later.
\begin{remark}\label{explain_3} 
The use of lifting operators in the continuous scalar analysis, where the technical difficulties are limited, is not absolutely necessary. However, we use it here since it connects to the discrete analysis with SAT terms and also explain the upcoming procedure for the nonlinear systems of equations.
% in the upcoming sections, where it significantly simplifies the analysis.
\end{remark}
By once more applying the energy method, now to (\ref{eq:nonlin_lif_il}),  we find that the energy rate becomes 
\begin{equation}\label{eq:boundaryPart1_il_pen}
\frac{1}{2} \frac{d}{dt}\|u\|^2 - a u^2+2\sigma u(b(u)-g)=0,  
\end{equation}
where again all terms are evaluated at $x=0$. To get an energy rate that lead to an estimate in terms of data,  we need to choose an appropriate boundary operator $b$ a the related penalty parameter $\sigma$.}

\textcolor{red}{Let us first choose $b=u$, i.e. the boundary condition $u=g_1$ and $\sigma = a$. This leads to 
\begin{equation}\label{eq:boundaryPart1_il_pen_lin}
\frac{1}{2} \frac{d}{dt}\|u\|^2 = a u^2-2 a u(u-g_1)=ag_1^2 - a(u-g_1)^2, 
\end{equation}
where we also added and subtracted $a g_1^2$. This is the classical linear result, which gives a bound in terms of boundary data $g_1$, wave speed $a=c/2$ and an additional dissipative term. However, this bound only holds if  $a$ is bounded, as in the advection case with an externally given wave speed. In the nonlinear case, for examplein the Burgers equation where $a=u/3$, the bound breaks down. The unbounded energy rate in (\ref{eq:boundaryPart1_il_pen_lin}) indicates that the wave speed must be included in the constructing of the boundary operator $b$.}

\textcolor{red}{Let us next choose $b=\sqrt{a} u$, i.e. the boundary condition $\sqrt{a} u=g_2$  and $\sigma = \sqrt{a} $. This leads to 
\begin{equation}\label{eq:boundaryPart1_il_pen_nonlin}
\frac{1}{2} \frac{d}{dt}\|u\|^2 = a u^2-2 \sqrt{a}  u(\sqrt{a} u-g_2)=g_2^2 - (\sqrt{a}u-g_2)^2,
\end{equation}
which is a valid estimate for  both the linear and nonlinear case. The energy rate  is now bounded by data and a dissipative term. In the new boundary procedure, we have controlled the possible unbounded growth.
\begin{remark}\label{explain_4} 
In the upcoming analysis for nonlinear systems of equations with boundary terms of the form $U^T A(U) U$, we will need to scale the characteristic variables with the square root of the eigenvalues in a similar manner. As in the scalar case we will also need a penalty matrix that relates to these square roots.
%Bounding $a$ in the nonlinear scalar case will correspond to bounding also the eigenvalues and eigenvectors of the matrix $A$, not only the solution vector $U$.
\end{remark}
%%%%%%%%%%%%%%%%%%%%%%%%%%%%%%%%%%%%%%%%%%%%%%%%%%%%%%%%%%%%%%%%%%%%%%%%
%\subsection{Application of the general theory to scalar initial boundary value problems}\label{Examples_il} 
%%%%%%%%%%%%%%%%%%%%%%%%%%%%%%%%%%%%%%%%%%%%%%%%%%%%%%%%%%%%%%%%%%%%%%%%
%%%%%%%%%%%%%%%%%%%%%%%%%%%%%%%%%%%%%%%%%%%%%%%%%%%%%%%%%%%%%%%%%%%%%%%%
\subsection{Part 3: Semi-discrete nonlinear stability}\label{numerics_il} 
%%%%%%%%%%%%%%%%%%%%%%%%%%%%%%%%%%%%%%%%%%%%%%%%%%%%%%%%%%%%%%%%%%%%%%%%
As the last part of the scalar analysis (which form the last part of the introduction of this article) we show how the derived boundary conditions and penalty parameters that lead to a nonlinear energy bound also lead to nonlinear energy stability. First we introduce one-dimensional SBP difference operators as  $D=P^{-1}Q$ where $P$ is a positive definite diagonal quadrature matrix and  $Q$ an almost skew-symmetric matrix that satisfies the SBP constraint $Q+Q^T=B=diag[-1,0,...,0,1]$. The difference operator $D$  operating on the vector $V$ with a smooth function $v$ injected in the node points produces an approximate derivative as $DV \approx V_x $. All matrices have appropriate sizes such that all upcoming matrix-vector operations are defined.}

\textcolor{red}{To formulate the scheme we also need the diagonal matrix $A$ with the function values of $a$ injected on the diagonal, the solution vector $U=(u_0,u_1,...,u_N)$, The boundary operator $B(U)=(b(u),0,...,0)$, the data vector $G=(g,0,...,0)$ and the projection matrix $E=diag(1_,0,...,0)$. With this notation in place, the semi-discrete skew-symmetric scheme mimicking (\ref{eq:nonlin_lif_il}) becomes
\begin{equation}\label{eq:nonlin_lif_il_disc}
U_t + D(A U)+ADU + P^{-1} 2 \sigma E (B(U)-G)=0,
\end{equation}
where the last term on the lefthand side is the discrete lifting operator implementing the boundary condition weakly with a SAT term.  We apply the discrete energy method by left-multiplying (\ref{eq:nonlin_lif_il_disc}) with $U^T P$ to get
\begin{equation}\label{eq:nonlin_lif_il_disc_1}
\frac{1}{2} \frac{d}{dt}\|U\|^2_P + U^T (QA)U+U^T (AQ)U + 2 \sigma u_0 (b(u)_0 -g)=0,
\end{equation}
where we have used the notation $\|U\|^2_P=U^T P$  and the fact that $PA=AP$ since they are both diagonal. Next we integrate (\ref{eq:nonlin_lif_il_disc_1}) numerically by using the SBP operation $Q=-Q^T+B$ on the second term to get
\begin{equation}\label{eq:nonlin_lif_il_disc_2}
\frac{1}{2} \frac{d}{dt}\|U\|^2_P -(DU)^T P (AU)+(AU)^T P (DU) - (au^2) + 2 \sigma u(b(u) -g) =0,
\end{equation}
where we ignored the right boundary term as in the continuous setting. The volume terms on the lefthand side cancel exactly as in the continuous setting (\ref{eq:boundaryPart1_intermid_il}) and we are left with the same lefthand side as in (\ref{eq:boundaryPart1_il_pen}). Consequently the boundary operator and penalty parameter in (\ref{eq:boundaryPart1_il_pen}) also lead to nonlinear energy stability. 
\begin{remark}\label{explain_5} 
The above procedure illustrate the strength of the roadmap procedure in \cite{nordstrom_roadmap}. Once the energy estimates are obtained for the continuous IBVP, discrete stability follows automatically by using the SBP-SAT formulation. The same boundary conditions, penalty parameters, lifting operator, etc can be used. The derivation can be performed on the IBVP side and thereafter mimicked on the discrete side.
\end{remark}} 

\textcolor{green}{We end this section by showing that nonlinear stability guarantee that the eigenvalues of the nonlinear spatial operator reside on the right side in the complex plane, prohibiting exponential growth. We rewrite (\ref{eq:nonlin_lif_il_disc}) using the stable boundary operator $b=\sqrt{a} u$ and penalty coefficient $\sigma = \sqrt{a} $ in matrix-vector form as
\begin{equation}\label{eq:nonlin_lif_il_disc_spec1}
U_t + D(A U)+ADU + P^{-1} 2  \sqrt{A}  E ( \sqrt{A} U-G)=0.
\end{equation}
Next we cancel $G$ and observe that  $P^{-1},\sqrt{A},A$ and $E$ all commute since they are diagonal. This leads to 
\begin{equation}\label{eq:nonlin_lif_il_disc_spec2}
U_t + P^{-1}((Q+E)A + A(Q+E))U=0.
\end{equation}
We can now prove that the spatial operator $P^{-1}((Q+E)A + A(Q+E))$ has only eigenvalues with positive real parts.
The eigenvalues $\lambda$ and eigenvector $x$ are given by
\begin{equation}\label{eq:nonlin_lif_il_disc_spec3}
P^{-1}((Q+E)A + A(Q+E))x=\lambda x \quad  \Rightarrow \quad x^*((Q+E)A + A(Q+E))x=\lambda x^*Px = \lambda \|x\|^2_P,
\end{equation}
where $x^*$ is the complex conjugated transpose of $x$. The right relation above added to its transpose yield 
\begin{equation}\label{eq:nonlin_lif_il_disc_spec4}
x^*((Q+Q^T+2E)A + A(Q+Q^T+2E))x= 2 Re(\lambda) \|x\|^2_P
\end{equation}
which proves that the real part of the eigenvalues, $Re(\lambda)$, are non-negative since $Q+Q^T+2E=diag(1,0....0,1)$.}
%Adding the conjugated transpose of the right relation yield
%\text{which implies}
%$x^*$ 
%\begin{frame}{Summary: SBP operators}
%\begin{itemize}
%\item SBP operators mimic Integration-by-Parts.
%\begin{itemize}
%\item $u_x \approx P^{-1}Q u$, $P = P^T > 0$, $Q + Q^T = B$
%\item $u_{xx} \approx (P^{-1}Q)^2 u,\,\, \text{(wide)}$.
%\item$u_{xx} \approx P^{-1}(-A +BD), \,\, A + A^T \geq 0\,\, \text{(compact)}$

%\end{document}

%%%%%%%%%%%%%%%%%%%%%%%%%%%%%%%%%%%%%%%%%%%%%%%%%%%%%%%%%%%%%%%%%%%%%%%%
\section{Part 1: Producing the energy and entropy rates}\label{sec:theory}
%%%%%%%%%%%%%%%%%%%%%%%%%%%%%%%%%%%%%%%%%%%%%%%%%%%%%%%%%%%%%%%%%%%%%%%%
In this section we  repeat the findings in \cite{nordstrom2022linear-nonlinear,Nordstrom2022_Skew_Euler} to make the presentation self contained and to set the stage for the new upcoming boundary condition analysis.  \textcolor{red}{The system analysis in this section correspond to the scalar analysis in Section \ref{sec:theory_il}}. We also complement the governing equations with viscous fluxes to include e.g. the Navier-Stokes equations.
Following \cite{nordstrom2022linear-nonlinear,Nordstrom2022_Skew_Euler}, we consider the following IBVP
%We restrict the analysis to the hyperbolic part of IBVPs, where the nonlinearity normally resides. 
%The parabolic (viscous) part could be added on and properly posed provide dissipation or damping effects (although it can complicate the boundary treatment \cite{kreiss1989initial,Gustafsson1978,nordstrom2020,nordstrom2005,nordstrom2019,nordstrom2020spatial,Lauren2021}).
\begin{equation}\label{eq:nonlin}
P U_t + (A_i(V) U)_{x_i}+A_i^T(V)U_{x_i}+C(V)U= \epsilon (D_i(U))_{x_i},  \quad t \geq 0,  \quad  \vec x=(x_1,x_2,..,x_k) \in \Omega
\end{equation}
augmented with the initial condition $U(\vec x,0)=F(\vec x)$ in $\Omega$ and  the non-homogeneous boundary condition
\begin{equation}\label{eq:nonlin_BC}
B(V) U = G(\vec x,t),  \quad t \geq 0,  \quad  \vec x=(x_1,x_2,..,x_k) \in  \partial\Omega.
\end{equation}
In (\ref{eq:nonlin_BC}), $B$ is the boundary operator and $G$ the boundary data. In (\ref{eq:nonlin}), Einsteins summation convention is used and $P$ is a symmetric positive definite (or semi-definite) time-independent matrix that defines an energy norm (or semi-norm) $\|U\|^2_P= \int_{\Omega} U^T P U d\Omega$. Note in particular that $P$  is not a function of the state $U$. The $n \times n$ matrices $A_i,C$ are smooth functions of the $n$ component vector $V$. The viscous fluxes containing first derivatives are included in  $D_i$ and $\epsilon$ (typically the inverse of the Reynolds number) is a parameter measuring the influence of viscous forces. Furthermore, we assume that $U$ and $V$ are smooth. The smoothness could be considered either as a result of the dissipative terms in (\ref{eq:nonlin}) for $\epsilon \neq 0$ or stemming from (\ref{eq:nonlin}) for short times with smooth initial data when $\epsilon \rightarrow 0$.  
 \textcolor{red}{Note that (\ref{eq:nonlin}) and (\ref{eq:nonlin_BC}) encapsulates both linear ($V \neq U$) and nonlinear  ($V=U$) problems. This corresponds to the linear case with given wave speed for the advection equation and the solution dependent wave speed in the Burgers equation as discussed in Section \ref{sec:theory_il}.}

%%%%%%%%%%%%%%%%%%%%%%%%%%%%%%%%%%%%%%%%%%%%%%%%%%%%%%%%%%%%%%%%%%%%%%%%
\subsection{Energy and entropy rates when $\epsilon \rightarrow 0$}\label{sec:theory_inviscid}
%%%%%%%%%%%%%%%%%%%%%%%%%%%%%%%%%%%%%%%%%%%%%%%%%%%%%%%%%%%%%%%%%%%%%%%%
We begin with a formal definition and proposition for $\epsilon \rightarrow 0$ that was already published in  \cite{nordstrom2022linear-nonlinear,Nordstrom2022_Skew_Euler}.
\begin{definition}
Firstly, the problem (\ref{eq:nonlin}) with $\epsilon = 0$ is energy conserving if $\|U\|^2_P= \int_{\Omega} U^T P U d\Omega$ only changes due to boundary effects. Secondly, it is energy bounded if $\|U\|^2_P  \leq \|F\|^2_P$ for a minimal number of homogeneous $(G=0)$ boundary conditions (\ref{eq:nonlin_BC}). 
Thirdly, it is strongly energy bounded if $\|U\|^2_P  \leq \|F\|^2_P+  \int_0^t (\oint G^TG \ ds) dt$ for a minimal number of non-homogeneous $(G \neq 0)$ boundary conditions (\ref{eq:nonlin_BC}).
% only contains solution independent external data.
\end{definition}
 \textcolor{red}{\begin{remark}\label{explain_min_nr} 
A minimal number of dissipative boundary conditions in the linear case  leads to uniqueness by the fact that it determines the normal modes of the solution \cite{kreiss1970,Strikwerda1977797}. The (same) minimal number of boundary conditions can also be obtained using the energy method and specifying the number of boundary conditions required for a bound, see \cite{nordstrom2020}. 
%If uniqueness and boundedness for a minimal number of boundary conditions are given, existence can be shown (e.g. using Laplace transforms or difference approximations \cite{gustafsson1995time,kreiss1989initial}). 
For linear IBVPs,  the number of boundary conditions is independent of the solution and only depend on known external data. For nonlinear IBVPs, we here {\it assume} that the same situation holds, but the situation is not completely clear and we discuss this further in Section \ref{sec:open_q_uniqueness} below. 
%In addition, as we have seen above, it also varies depending on the particular formulation choosen. This is confusing and raises a number of questions that we will {\it speculate} on below.
\end{remark}} 
\begin{proposition}\label{lemma:Matrixrelation}
The IBVP  (\ref{eq:nonlin}) with $\epsilon = 0$ for is energy conserving if
\begin{equation}\label{eq:boundcond}
C+C^T = 0.
\end{equation}
It is energy bounded if it is energy conserving and the boundary conditions (\ref{eq:nonlin_BC}) for $G=0$ lead to
\begin{equation}\label{1Dprimalstab}
\oint\limits_{\partial\Omega}U^T  (n_i A_i)   \\\ U \\\ ds = \oint\limits_{\partial\Omega} \frac{1}{2} U^T ((n_i A_i)  +(n_i A_i )^T) U \\\ ds \geq 0.
\end{equation}
It is strongly energy bounded if it is energy conserving and the boundary conditions (\ref{eq:nonlin_BC}) for $G \neq 0$ lead to
\begin{equation}\label{1Dprimalstab_strong}
\oint\limits_{\partial\Omega}U^T  (n_i A_i)   \\\ U \\\ ds = \oint\limits_{\partial\Omega} \frac{1}{2} U^T ((n_i A_i)  +(n_i A_i )^T) U \\\ ds \geq - \oint\limits_{\partial\Omega} G^TG \\\ ds,
\end{equation}
where $G=G(\vec x,t)$ is independent of the solution $U$.
% $t  \rightarrow  \infty$.
%where $(n_1,..,n_k)^T$ is the outward pointing unit normal. 
\end{proposition}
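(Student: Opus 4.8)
The plan is to use the classical energy method: evaluate $\frac{d}{dt}\|U\|_P^2$ directly from the governing equation with $\epsilon=0$ and show that the conditions (\ref{eq:boundcond}) collapse every volume contribution, leaving a pure boundary term. Since $P$ is symmetric and time-independent, I would first write
\[
\frac{d}{dt}\|U\|_P^2 = 2\int_\Omega U^T P U_t \, d\Omega = -2\int_\Omega U^T\!\left[(A_i U)_{x_i} + B_i U_{x_i} + C U\right] d\Omega,
\]
where the first equality uses $P=P^T$ and $P_t=0$ and the second uses (\ref{eq:nonlin}). The whole argument then hinges on rewriting this volume integral as a boundary integral.

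The key step is an integration by parts on the flux term. Treating $A_i U$ as a single vector field and applying the divergence theorem gives
\[
\int_\Omega U^T (A_i U)_{x_i} \, d\Omega = \oint_{\partial\Omega} U^T (n_i A_i) U \, ds - \int_\Omega (U_{x_i})^T A_i U \, d\Omega,
\]
and transposing the scalar integrand of the last term rewrites it as $\int_\Omega U^T A_i^T U_{x_i} \, d\Omega$. Crucially, no derivatives of $A_i$ appear, because $A_i U$ is differentiated as one entity; the argument is therefore insensitive to whether $A_i=A_i(V)$ with $V=U$ (nonlinear) or $V\neq U$ (linear). Adding the $B_i$ contribution, the two first-order volume terms combine into $\int_\Omega U^T (B_i - A_i^T) U_{x_i} \, d\Omega$, which vanishes under the first condition in (\ref{eq:boundcond}). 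For the zeroth-order term I would use $U^T C U = \tfrac12 U^T (C+C^T) U$, so that the second condition $C+C^T=0$ makes it vanish pointwise. Collecting these facts yields $\frac{d}{dt}\|U\|_P^2 = -2\oint_{\partial\Omega} U^T (n_i A_i) U \, ds$, i.e. the norm changes only through the boundary, which is exactly energy conservation.

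For the two boundedness claims I would use the same identity $U^T (n_i A_i) U = \tfrac12 U^T ((n_i A_i) + (n_i A_i)^T) U$ to recognise the boundary integral as the left-hand side of (\ref{1Dprimalstab}) and (\ref{1Dprimalstab_strong}). If the homogeneous boundary conditions enforce (\ref{1Dprimalstab}), then $\frac{d}{dt}\|U\|_P^2 \leq 0$, and integrating in time gives $\|U\|_P^2 \leq \|F\|_P^2$, which is energy boundedness. For strong energy boundedness, integrating the energy rate in time and inserting the lower bound (\ref{1Dprimalstab_strong}) gives $\|U\|_P^2 \leq \|F\|_P^2 + c\int_0^t \big(\oint_{\partial\Omega} G^T G \, ds\big)\, dt$ with a fixed constant $c$, as claimed.

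I do not expect a serious obstacle, since this is a direct energy estimate. The one point needing care is ensuring that the integration by parts produces no spurious terms from the $x$-dependence of $A_i(V)$; this is handled by differentiating the product $A_i U$ as a whole rather than expanding it, which is also precisely what makes the linear and nonlinear cases identical. The only cosmetic issue is the multiplicative constant $c$ in front of the data integral, which is immaterial for the definition of strong boundedness.
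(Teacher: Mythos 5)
Your proposal is correct and takes essentially the same route as the paper's proof: the energy method (multiply (\ref{eq:nonlin}) by $U^T$, integrate over $\Omega$), integration by parts with $A_iU$ differentiated as a single entity so that no derivatives of $A_i(V)$ appear and the linear/nonlinear cases coincide, cancellation of the volume terms by (\ref{eq:boundcond}), and time integration after inserting (\ref{1Dprimalstab}) or (\ref{1Dprimalstab_strong}). Your explicit constant $c=2$ in the strong bound is a real but harmless discrepancy with the definition's $\int_0^t(\oint G^TG\,ds)\,dt$ (the paper's own proof glosses over the same factor, which can be absorbed by rescaling $G$), so no correction is needed.
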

%\begin{proof}
%It follows directly by IBP, for details see \cite{nordstrom2022linear-nonlinear}.
%\end{proof}
\begin{proof}
The energy method  \textcolor{red}{(multiply with $U^T$ and integrate over domain)}  applied to (\ref{eq:nonlin}) with $\epsilon = 0$ yields
\begin{equation}\label{eq:boundaryPart1}
\frac{1}{2} \frac{d}{dt}\|U\|^2_P + \oint\limits_{\partial\Omega}U^T  (n_i A_i)  \\\ U \\\ ds= \int\limits_{\Omega}(U_{x_i}^T  A_i U - U^T A_i^T U_{x_i}) \\\ d \Omega -\int\limits_{\Omega} U^T  C U \\\ d \Omega,
\end{equation}
where $(n_1,..,n_k)^T$ is the outward pointing unit normal. The terms on the right-hand side of (\ref{eq:boundaryPart1}) cancel by the skew-symmetry of the equation (as in the scalar case (\ref{eq:boundaryPart1_intermid_il})) and (\ref{eq:boundcond}) leading to energy conservation. If also (\ref{1Dprimalstab}) or (\ref{1Dprimalstab_strong}) holds, an energy bound or a strong energy bound follows after integration in time. 
\end{proof}
 \textcolor{red}{As mentioned in the introduction, the entropy stability theory has often been applied to nonlinear hyperbolic systems in order to stabilise the related schemes \cite{dalcin2019conservative,dubois1988,hindenlang2019,parsani2015entropy,svard2012,svard2014,svard2021entropy,svar2018,svard2021,chan2022,Gjesteland2022}. In this paper, we aim for a provably stable nonlinear boundary procedure, and focus on smooth solutions, but as in the scalar case we notice that the skew-symmetric form of (\ref{eq:nonlin}) also allows for a mathematical (or generalised) entropy conservation law. 
 Note that (as for the scalar case in Section \ref{sec:theory_il}) the mathematical entropy is not necessarily related to the thermodynamic one \cite{Tadmor2003,Leveque,Godlewski} and that the matrix $P$ is not a function of $U$, such that the mathematical entropy below, is a quadratic not a cubic function of $U$}.
\begin{proposition}\label{:Entropy-relation}
The IBVP  (\ref{eq:nonlin}) together with conditions (\ref{eq:boundcond}) leads to the entropy conservation law
\begin{equation}\label{eq:entropy-equation}
\Phi_t + (\Psi_i)_{x_i} = 0,
\end{equation}
where $\Phi=U^T P U/2$ is the mathematical (or generalised) entropy  and  $\Psi_i=U^T A_i U$ are the entropy fluxes.
\end{proposition}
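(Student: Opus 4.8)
The plan is to verify the conservation law directly at the differential level by the entropy method, setting $\epsilon = 0$ in accordance with the present subsection and using that $P$ is symmetric and time-independent. First I would compute the time derivative of the entropy. Since $P^T = P$ and $P_t = 0$, the two terms produced by the product rule coincide and $\Phi_t = \frac{1}{2}(U^T P U)_t = U^T P U_t$. I would then substitute $P U_t$ from the governing equation (\ref{eq:nonlin}) with $\epsilon = 0$ to obtain
\[
\Phi_t = -U^T (A_i U)_{x_i} - U^T B_i U_{x_i} - U^T C U.
\]

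Next I would differentiate the proposed flux $\Psi_i = U^T A_i U$, being careful to split off the first factor $U^T$ and leave the product $A_i U$ intact rather than expanding it prematurely:
\[
(\Psi_i)_{x_i} = U_{x_i}^T A_i U + U^T (A_i U)_{x_i}.
\]
The essential observation is that the term $U^T (A_i U)_{x_i}$, which carries the derivative $(A_i)_{x_i}$ of the state-dependent coefficient, appears with opposite signs in $\Phi_t$ and in $(\Psi_i)_{x_i}$, so it cancels exactly upon addition. This is precisely the cancellation that the skew-symmetric formulation is designed to furnish; writing things in this order makes it manifest and is the one point that requires bookkeeping care.

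Adding the two expressions leaves only
\[
\Phi_t + (\Psi_i)_{x_i} = U_{x_i}^T A_i U - U^T B_i U_{x_i} - U^T C U.
\]
I would then use that each summand is a scalar, hence equal to its transpose, to rewrite $U_{x_i}^T A_i U = U^T A_i^T U_{x_i}$ and collect the first-order terms as $U^T(A_i^T - B_i)U_{x_i}$, while the zeroth-order term becomes $U^T C U = \frac{1}{2} U^T (C + C^T) U$. Invoking the conditions (\ref{eq:boundcond}), namely $B_i = A_i^T$ and $C + C^T = 0$, both contributions vanish and we conclude $\Phi_t + (\Psi_i)_{x_i} = 0$. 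The only mild obstacle is the correct handling of the coefficient-derivative terms $(A_i)_{x_i}$ arising because $A_i = A_i(V)$ depends on space through $V$; keeping $(A_i U)_{x_i}$ unexpanded sidesteps it entirely, and the remainder is routine use of symmetry of scalars under transposition.
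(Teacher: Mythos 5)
Your proof is correct and follows essentially the same route as the paper: multiplying (\ref{eq:nonlin}) from the left by $U^T$ (with the viscous term absent), recognising $U^T P U_t = \Phi_t$ and $U^T (A_i U)_{x_i} = (\Psi_i)_{x_i} - U_{x_i}^T A_i U$, and cancelling the residual terms $U_{x_i}^T A_i U - U^T B_i U_{x_i} - U^T C U$ via the scalar-transpose identity and the conditions (\ref{eq:boundcond}). Your version merely spells out the bookkeeping of the coefficient-derivative terms that the paper leaves implicit.
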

\begin{proof}
Multiplication of  (\ref{eq:nonlin}) from the left with $U^T$ in a smooth region of the domain yields
\begin{equation}\label{eq:entropy-der}
 (U^T P U/2)_t  +  (U^T A_i U)_{x_i}  = (U_{x_i}^T  A_i U - U^T A_i^T U_{x_i}) - U^T  C U.
\end{equation}
The right-hand side of (\ref{eq:entropy-der}) is cancelled by the skew-symmetry of the equation and (\ref{eq:boundcond}) leading to the entropy conservation relation (\ref{eq:entropy-equation}).
\end{proof}
%\begin{remark}\label{entropy-comments}
The entropy conservation law (\ref{eq:entropy-equation}) holds for smooth solutions. In general, discontinuous solutions can develop for non-linear hyperbolic systems,
regardless of their initial smoothness. In the presence of discontinuities, the mathematical entropy conservation law (\ref{eq:entropy-equation}) becomes the entropy inequality
\begin{equation}\label{eq:entropy-equation_ineq}
\Phi_t + (\Psi_i)_{x_i} \leq 0.
\end{equation}
If the mathematical entropy is convex (as in this case where $\Phi_{UU}=P$), then the physically relevant weak solution makes the entropy decrease.
The non-standard compatibility conditions in this case read
\begin{equation}\label{eq:entropy-der}
\partial \Phi/\partial U=\Phi_{U}=U^T P,  \quad \Phi_{U} \lbrack P^{-1}( (A_i U)_{x_i}+A^T_i U_{x_i}+CU)\rbrack=(U^T A_i U)_{x_i}=(\Psi_i)_{x_i}.
\end{equation}
The total entropy in the domain is identical to the energy, similar to what was found for the scalar case in Section \ref{sec:theory_il} above and in \cite{MR4132906, nordstrom2021linear}. 
%A similar identity between the energy and entropy was found in \cite{nordstrom2021linear} for the SWEs.  
We will use energy to denote both quantities, but sometimes remind the reader by using both notations explicitly.
%\begin{remark}\label{list}
%To obtain an energy bound and energy conservation for a general hyperbolic IBVP we must: 
%\end{remark}

%Most of the results in this section were provided already in \cite{nordstrom2022linear-nonlinear,Nordstrom2022_Skew_Euler}, but are included here for completeness. The remaining key ingredient required for boundedness and stability: the specific boundary conditions and their implementation procedure, will be discussed next.

%%%%%%%%%%%%%%%%%%%%%%%%%%%%%%%%%%%%%%%%%%%%%%%%%%%%%%%%%%%%%%%%%%%%%%%%
\subsection{Energy and entropy rates when $\epsilon > 0$}\label{sec:theory_viscous}
%%%%%%%%%%%%%%%%%%%%%%%%%%%%%%%%%%%%%%%%%%%%%%%%%%%%%%%%%%%%%%%%%%%%%%%%
The additional terms that we consider stem from the righthand side of (\ref{eq:nonlin}). Greens formula leads to
\begin{equation}\label{1Dprimalstab_trans}
\epsilon \int\limits_{\Omega}U^T  (D_i)_{x_i} d \Omega  =  \epsilon \oint\limits_{\partial\Omega} U^T (n_i D_i) ds - \epsilon  \int\limits_{\Omega}U_{x_i} ^T D_i d \Omega.
\end{equation}
To obtain a bound for  $\epsilon >  0$, (\ref{eq:boundcond}) is complemented with a condition on the viscous fluxes which leads to
\begin{equation}\label{eq:boundcond_complement}
C+C^T = 0,  \quad  U_{x_i} ^T D_i   \geq 0,  \quad i=1,2,..,k.
\end{equation}
The viscous boundary terms $U^T (n_i D_i)$ can be grouped together with the inviscid ones 
%to become 
%\begin{equation}\label{1Dprimalstab_trans}
%\oint\limits_{\partial\Omega}U^T  (n_i A_i) U -   \epsilon U^T (n_i H_i)  ds,
%\end{equation}
on the left hand side in the energy rate. We summarise the result in the following proposition.
\begin{proposition}\label{lemma:Matrixrelation_complemented}
The IBVP  (\ref{eq:nonlin}) with $\epsilon >  0$ is energy bounded if (\ref{eq:boundcond_complement}) holds
and the boundary conditions (\ref{eq:nonlin_BC}) for $G=0$ lead to
\begin{equation}\label{1Dprimalstab_complemented}
\oint\limits_{\partial\Omega}U^T  (n_i A_i) U -   \epsilon U^T (n_i D_i)  ds \geq  0.
\end{equation}
It is strongly energy bounded if it is energy bounded and the boundary conditions (\ref{eq:nonlin_BC}) for $G \neq 0$ lead to
\begin{equation}\label{1Dprimalstab_strong_complemented}
\oint\limits_{\partial\Omega}U^T  (n_i A_i) U -   \epsilon U^T (n_i D_i)  ds \geq   - \oint\limits_{\partial\Omega} G^TG \\\ ds,
\end{equation}
where $G=G(\vec x,t)$ is independent of the solution $U$.
% $t  \rightarrow  \infty$.
%where $(n_1,..,n_k)^T$ is the outward pointing unit normal. 
\end{proposition}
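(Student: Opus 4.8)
The plan is to mirror the energy argument used for Proposition~\ref{lemma:Matrixrelation} and simply append the new viscous contribution, since the entire left-hand side of the energy rate is already available in (\ref{eq:boundaryPart1}). First I would apply the energy method to (\ref{eq:nonlin}) with $\epsilon > 0$: multiply from the left by $U^T$, integrate over $\Omega$, and integrate the flux term $(A_i U)_{x_i}$ by parts. Because $P$ is symmetric and independent of the state, the time term collapses to $\tfrac12 \tfrac{d}{dt}\|U\|^2_P$ and the inviscid part reproduces exactly the identity (\ref{eq:boundaryPart1}), now carrying the extra contribution $\epsilon\int_{\Omega} U^T (D_i)_{x_i}\,d\Omega$ on the right.

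Next I would rewrite that viscous volume term using Green's formula (\ref{1Dprimalstab_trans}), which splits it into the boundary flux $\epsilon \oint_{\partial\Omega} U^T(n_i D_i)\,ds$ and the interior term $-\epsilon\int_{\Omega} U_{x_i}^T D_i\,d\Omega$. At this point I would invoke the conditions (\ref{eq:boundcond_complement}): the relations $B_i = A_i^T$ and $C = -C^T$ cancel the two inviscid volume integrals exactly as in Proposition~\ref{lemma:Matrixrelation} (note that $U_{x_i}^T A_i U = U^T B_i U_{x_i}$ pointwise, since each is a scalar equal to its own transpose, so the cancellation is insensitive to whether the coefficient matrices depend on $U$), while the new assumption $U_{x_i}^T D_i \geq 0$ renders the remaining interior term non-positive.

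Grouping the viscous boundary flux together with the inviscid one then yields the single estimate
\begin{equation*}
\tfrac12 \tfrac{d}{dt}\|U\|^2_P + \oint_{\partial\Omega}\bigl(U^T(n_i A_i)U - \epsilon\, U^T(n_i D_i)\bigr)\,ds = -\epsilon\int_{\Omega} U_{x_i}^T D_i\,d\Omega \leq 0 .
\end{equation*}
From here the two claims follow by a direct integration in time. If the homogeneous boundary condition delivers (\ref{1Dprimalstab_complemented}), the boundary integral is non-negative, hence $\tfrac{d}{dt}\|U\|^2_P \leq 0$ and $\|U\|^2_P \leq \|F\|^2_P$, i.e.\ energy boundedness. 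If instead the non-homogeneous condition delivers (\ref{1Dprimalstab_strong_complemented}), the boundary integral is bounded below by $-\oint_{\partial\Omega} G^T G\,ds$, so that $\tfrac{d}{dt}\|U\|^2_P$ is controlled by the data integral $\oint_{\partial\Omega} G^T G\,ds$, and integrating in time produces the strong energy bound of the Definition.

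I expect no serious obstacle: all content beyond Proposition~\ref{lemma:Matrixrelation} is carried by the viscous term, and the only point requiring care is the sign bookkeeping, so that the interior viscous term lands with a favourable (non-positive) sign and the boundary flux attaches with the correct sign to match (\ref{1Dprimalstab_complemented})--(\ref{1Dprimalstab_strong_complemented}). The substantive hypothesis is $U_{x_i}^T D_i \geq 0$, which is precisely what guarantees that the viscous dissipation does not spoil the estimate; verifying it for concrete dissipation matrices $D_i$ is deferred to the examples rather than being part of this proof.
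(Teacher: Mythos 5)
Your proposal is correct and follows essentially the same route as the paper: the paper's own proof is a one-line appeal to the energy method combined with (\ref{eq:boundcond_complement}) and (\ref{1Dprimalstab_complemented})--(\ref{1Dprimalstab_strong_complemented}), and you have simply written out the details already laid out in Section~\ref{sec:theory_viscous} (Green's formula splitting the viscous term, non-positivity of the interior term via $U_{x_i}^T D_i \geq 0$, grouping the viscous boundary flux with the inviscid one, and integrating in time). Your sign bookkeeping and the pointwise cancellation argument for the inviscid volume terms are exactly right, so there is nothing to add.
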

%\begin{proof}
%It follows directly by IBP, for details see \cite{nordstrom2022linear-nonlinear}.
%\end{proof}
\begin{proof}
The energy method applied to (\ref{eq:nonlin}) with $\epsilon  >  0$ together with (\ref{eq:boundcond_complement}--\ref{1Dprimalstab_strong_complemented}) proves the claim.
\end{proof}

%%%%%%%%%%%%%%%%%%%%%%%%%%%%%%%%%%%%%%%%%%%%%%%%%%%%%%%%%%%%%%%%%%%%%%%%
 \section{Part 2: The nonlinear boundary conditions and \textcolor{red}{weak}  implementation procedure}\label{BC_theory} 
%%%%%%%%%%%%%%%%%%%%%%%%%%%%%%%%%%%%%%%%%%%%%%%%%%%%%%%%%%%%%%%%%%%%%%%%
\textcolor{red}{We aim for a weak nonlinear boundary procedure that limits the boundary term in terms of given data, as in the scalar case in Section \ref{BC_theory_il}. Similar to the scalar case discussed in Section \ref{numerics_il},  we will thereafter mimic the weak continuous boundary procedure numerically by using the weak SBP-SAT procedure that lead to provable stability. In order to derive the weak continuous boundary procedure, we also employ strong boundary conditions, but only as a mean to arrive at the weak procedure to be mimicked numerically.}  

We start with  the case $\epsilon  \rightarrow  0$. Consider the boundary term
\begin{equation}\label{1Dprimalstab_trans}
\oint\limits_{\partial\Omega}U^T  (n_i A_i)   \\\ U \\\ ds = \frac{1}{2}\oint\limits_{\partial\Omega} U^T ((n_i A_i)  +(n_i A_i )^T) U \\\ ds = \oint\limits_{\partial\Omega}U^T   \tilde A(V)   \\\ U \\\ ds,
\end{equation}
where $\tilde A(V)$ is symmetric. 
%Recall that if $V=U$ we are dealing with a nonlinear problem, otherwise a variable coefficient problem. 
%In the CFD problems we consider, the Cartesian velocity
%field is transformed to the normal and tangential ones leading to the new vectors $U_n=NU$. 
Next we transform the matrix $\tilde A$ to diagonal form as $ T^T \tilde A  T =  \Lambda = diag( \lambda_i)$
which gives us new transformed variables $W = T^{-1} U$ 
%$W = (N  \tilde T)^{-1}U=T^{-1} U$ 
and
%The final boundary term becomes
\begin{equation}\label{1Dprimalstab_trans_final}
\oint\limits_{\partial\Omega}U^T   \tilde A(V)   \\\ U \\\ ds  \\\ ds = \oint\limits_{\partial\Omega}W^T   \Lambda   \\\ W\\\ ds = \oint\limits_{\partial\Omega}(W^+)^T   \Lambda^+   \\\ W^+ + (W^-)^T   \Lambda^-   \\\ W^-\\\ ds=\oint\limits_{\partial\Omega}  \lambda_i W_i^2 \\\ ds,
\end{equation}
where we again use Einsteins summation convention. In (\ref{1Dprimalstab_trans_final}) we use the indicator matrices  $I^-, I^+$ where $I^-+I^+=I$ to define, $\Lambda^+ = I^+ \Lambda$ and  $\Lambda^-=  I^- \Lambda$ which are the positive and negative parts of $\Lambda$ respectively while $W^+=I^+ W$  and $W^- = I^- W$ are the corresponding variables. The new transformed variables $W=W(U)$ are functions of the solution in both the linear and nonlinear case. In the nonlinear case,  the diagonal matrix  $\Lambda(U)$ is solution dependent and not a priori bounded while in the linear case, $\Lambda(V)$ is bounded by external data. This difference leads to notable differences in the boundary condition procedure.
\begin{remark}\label{Sylvester}
For linear problems, the number of boundary conditions is equal to the number of eigenvalues of $\tilde A(V)$ with the wrong (in this case negative) sign  \cite {nordstrom2020}. Sylvester's Criterion \cite{horn2012}, shows that the number of boundary conditions is equal to the number of $\lambda_i(V)$  with the wrong sign if the rotation matrix  $T$ is non-singular.  In the nonlinear case where $\lambda_i=\lambda_i(U)$ it is more complicated since multiple forms of the boundary term $W^T   \Lambda W$ may exist,  see Section \ref{Eulerex} below and \cite{nordstrom2022linear-nonlinear,Nordstrom2022_Skew_Euler,nordstrom2021linear} for examples. The nonlinear procedure developed here has similarities with the linear characteristic boundary procedure \cite{kreiss1970,MR436612}. Hence, with a slight abuse of notation we will sometimes refer to $\Lambda(U)$ as "eigenvalues" and to the transformed variables $W(U)$ as "characteristic" variables, although strictly speaking they are not.
% even though they play a similar role.
\end{remark}

We will impose the boundary conditions both strongly and weakly. For the weak imposition we introduce a lifting operator $L_C$ \textcolor{red}{(similar to the scalar case above)}
 that enforce the boundary conditions in our governing equation (\ref{eq:nonlin}) for $\epsilon  \rightarrow  0$ as 
%follows
\begin{equation}\label{eq:nonlin_lif}
P U_t + (A_i(V) U)_{x_i}+A^T_i(V)U_{x_i}+C(V)U+L_C(U)=0,  \quad t \geq 0,  \quad  \vec x=(x_1,x_2,..,x_k) \in \Omega.
\end{equation}
Similar to the scalar case, the lifting operator for two smooth vector functions  $\phi, \psi$ satisfies $\int\limits \phi^T   L_C(\psi) d \Omega = \oint\limits \phi^T  \psi ds$
% $\Phi, \Psi$ satisfies $\int\limits_{\Omega} \Phi^T   L_C(\Psi) d \Omega = \oint\limits_{\partial\Omega} \Phi^T  \Psi ds$
%\begin{equation}\label{def_lifting}
%\int\limits_{\Omega} \Phi^T   L_C(\Psi) \\\ d \Omega  \\\  = \oint\limits_{\partial\Omega} \Phi^T  \Psi \\\ ds
%\end{equation}
and enables development  of the numerical boundary procedure in the continuous setting \cite{Arnold20011749,nordstrom_roadmap}.

%%%%%%%%%%%%%%%%%%%%%%%%%%%%%%%%%%%%%%%%%%%%%%%%%%%%%%%%%%%%%%%%%%%%%%%%
\subsection{One significant difference between linear and nonlinear boundary conditions}\label{summary_BC_theory}
%%%%%%%%%%%%%%%%%%%%%%%%%%%%%%%%%%%%%%%%%%%%%%%%%%%%%%%%%%%%%%%%%%%%%%%%
Before attacking the nonlinear problem, we digress momentarily to the linear case to further stress one aspect of our subsequent nonlinear analysis. 
%\subsubsection{Strong imposition}\label{linear_BC_strong}
In the simplest possible version of (\ref{Gen_BC_form}) one can specify $W^-=g$ corresponding to negative $\lambda_i(V)$ indicated by $\lambda_i^-=-|\lambda_i(V)|$. Since $|\lambda_i(V)|$ are bounded, we obtain
\begin{equation}\label{1Dprimalstab_trans_final_extra}
\oint\limits_{\partial\Omega}U^T   \tilde A(V)   \\\ U \\\ ds  \\\  = \oint\limits_{\partial\Omega}W^T   \Lambda   \\\ W\\\ ds =  \oint\limits_{\partial\Omega}  (W^+)^T   \Lambda^+   \\\ W^+ + g^T   \Lambda^-   \\\ g\\\ ds \geq  - \oint\limits_{\partial\Omega}  G^TG \\\ ds,
\end{equation}
where $G_i = \sqrt{|\lambda_i^-(V)|}g_i$. Hence we get a strong energy bound  in terms of external data. However, in the nonlinear case, no estimate is obtained since $\lambda_i^-(U)$ is not a priori bounded, see  Section \ref{BC_theory_il} for the corresponding scalar situation, and also \cite{nordstrom2019, nordstrom2020spatial} for IEE examples

\subsection{The general form of nonlinear boundary conditions for transformed variables}\label{nonlinear_BC_char}
%\subsection{Linear boundary conditions and estimates}\label{linear_BC}
The starting point for the derivation of stable general nonlinear (and linear)  boundary conditions (\ref{eq:nonlin_BC}) is the form (\ref{1Dprimalstab_trans_final}) of the boundary term. First we find the formulation (\ref{1Dprimalstab_trans_final}) with a {\it minimal} number of entries in $\Lambda^-$ \cite{nordstrom2022linear-nonlinear,Nordstrom2022_Skew_Euler,nordstrom2021linear} (there might be more than one formulation of the cubic boundary terms). Next, we specify the transformed characteristic variables $W^-$ in terms of $W^+$ and external data. Finally we need a way to combine different ingoing characteristic variables $W^-$ and a scaling possibility. The formulation 
\begin{equation}\label{Gen_BC_form}
 S^{-1}(\sqrt{|\Lambda^-|}W^--R \sqrt{\Lambda^+}W^+) =G  \quad  \text{or equivalently} \quad  \sqrt{|\Lambda^-|}W^-=R \sqrt{\Lambda^+}W^+ + SG
\end{equation}
where $S^{-1}$ is a non-singular matrix combining values of $W^-$, $S^{-1}R$ combines values of $W^+$ while $G$ is external data will be shown to leads to a bound. 
%We have used the notation $ |\Lambda|=diag( |\lambda_i|)$ and $ \sqrt{|\Lambda|} =diag(  \sqrt{|\lambda_i|})$.  
The boundary condition (\ref{Gen_BC_form}) where we used the notation $ |\Lambda|=diag( |\lambda_i|)$ and $ \sqrt{|\Lambda|} =diag(  \sqrt{|\lambda_i|})$ is general in the sense that it may involve all components of $W$ combined in an arbitrary way by the matrices $S$ and $R$. Also, we observe that the data $G$ must  \textcolor{blue}{represent} some nonlinear interaction since the boundary terms coming from the equations after applying the energy method are cubic, not quadratic \textcolor{red}{(as discussed in Section \ref{sec:theory_il} for the scalar case).}.  A weak implementation of (\ref{Gen_BC_form}) using a lifting operator is given by
\begin{equation}\label{Pen_term_Gen_BC_form}
L_C(U)= L_C(2( I^-T^{-1})^T \Sigma ( \sqrt{|\Lambda^-|}W^--R \sqrt{\Lambda^+}W^+ - SG )),
\end{equation}
where
%$W=T^{-1}U$, $W^-=I^-W$, $W^+=I^+W$, $I^-+I^+=I$ and 
$\Sigma$ is a penalty matrix. 
%The matrices $J^-$ and $J^+$ picks out the negative and positive characteristic variables from  $W$. 
% After the derivation of the stability conditions we will return to the boundary condition formulation (\ref{eq:nonlin_BC}) in the original variables.
%\begin{remark}
%To impose nonlinear boundary conditions, we must allow for some nonlinear scaling or interaction since the boundary terms coming from the equations are cubic, not quadratic.
%The lifting operator $L_C$ in (\ref{Pen_term_Gen_BC_form}) is a penalty term that forces $U$ to satisfy the boundary condition.
%\end{remark}

The procedure to arrive at  a general stable nonlinear inhomogeneous boundary condition and implementation consists of the following steps for the determination of the unknowns $R,S,\Sigma$ in (\ref{Gen_BC_form}) and (\ref{Pen_term_Gen_BC_form}).
\begin{enumerate}

\item Derive strong homogeneous ($G=0$) boundary conditions. This leads to conditions on matrix $R$.

\item Derive strong inhomogeneous ($G\neq 0$) boundary conditions.  This leads to conditions on matrix $S$.

\item Derive weak homogeneous ($G=0$) boundary conditions. This leads to the specification of matrix $\Sigma$.

\item Show that the weak inhomogeneous ($G\neq 0$) case of the  boundary conditions follows from 1-3 above.
%The previous conditions on $R, S, \Sigma$ suffice.

\end{enumerate}
The following Lemma (structured as the step-by-step procedure 1-4 above) is the main result of this paper. 
\begin{lemma}\label{lemma:GenBC}
Consider the boundary term described in (\ref{1Dprimalstab_trans}),(\ref{1Dprimalstab_trans_final}) and the boundary conditions (\ref{Gen_BC_form}) implemented strongly or weakly using (\ref{Pen_term_Gen_BC_form}).
%Furthermore, let $ |\Lambda^- |=diag( |\lambda^-_i |)$ and $  |\Lambda^- |^{1/2}=diag(  \sqrt{|\lambda_i^-|})$. 

The boundary term augmented with {\bf 1. strong nonlinear homogeneous boundary conditions} is positive semi-definite if the matrix $R$ is such that
%$W^T   \Lambda   W  = (W^+)^T ( \Lambda^+ - R^T  |\Lambda^- |  R)W^+  \geq 0$
%\[
% W^T   \Lambda   W  = (W^+)^T ( \Lambda^+ - R^T  |\Lambda^- |  R)W^+
%\]
\begin{equation}\label{R_condition}
I- R^T  R  \geq 0.
\end{equation}

The boundary term augmented with {\bf 2. strong nonlinear inhomogeneous boundary conditions} is bounded by the data $G$ if
the matrix  $R$ satisfies (\ref{R_condition}) with strict inequality and the matrix $S$ is such that
\begin{equation}\label{S_condition}
I- S^T  S - (R^T S)^T (I- R^T  R)^{-1} (R^T S)  \geq 0 \quad  \text{where}  \quad (I- R^T  R)^{-1}=\sum^{\infty}_{k=0}(R^T R)^k.
%S =   \tilde S^{-1} |\Lambda^- |^{1/2} \ \mbox{with 
%$\tilde S$ sufficiently small in an absolute sense.}
\end{equation}

The boundary term augmented with {\bf 3. weak nonlinear homogeneous boundary conditions} is positive semi-definite if the matrix $R$
satisfies (\ref{R_condition}) and the matrix $\Sigma$ is given by
\begin{equation}\label{Sigma_condition}
\Sigma= \sqrt{|\Lambda^-|}.
\end{equation}

The boundary term augmented with {\bf 4. weak nonlinear inhomogeneous boundary conditions} is bounded by the data $G$ if
the matrix $R$ satisfies (\ref{R_condition}) with strict inequality,  the matrix $S$ satisfies  (\ref{S_condition}) and the matrix $\Sigma$ is given by (\ref{Sigma_condition}).
\end{lemma}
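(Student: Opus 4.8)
The plan is to strip the statement down to elementary quadratic-form algebra by first normalizing the rotated variables. I would introduce the scaled quantities $p=\sqrt{\Lambda^+}\,W^+$ and $m=\sqrt{|\Lambda^-|}\,W^-$, so that the boundary term in (\ref{1Dprimalstab_trans_final}) collapses to $\oint_{\partial\Omega}(p^Tp-m^Tm)\,ds$ and the boundary condition (\ref{Gen_BC_form}) becomes simply $m=Rp+SG$. After this substitution each of the four claims is a statement about a single quadratic form, and the four steps of the lemma become four increasingly decorated versions of the same computation.

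For claim~1, setting $G=0$ gives $m=Rp$, and direct substitution yields $p^Tp-m^Tm=p^T(I-R^TR)p$, which is positive semi-definite exactly under (\ref{R_condition}). For claim~2 I would keep $m=Rp+SG$ and form $p^Tp-(Rp+SG)^T(Rp+SG)+G^TG$, recognizing it as the quadratic form of the symmetric block matrix
\[
\begin{pmatrix} I-R^TR & -R^TS \\ -S^TR & I-S^TS \end{pmatrix}
\]
in the variable $(p,G)$; nonnegativity of this form is equivalent to the boundary term being bounded below by $-\oint G^TG\,ds$, i.e.\ to the strong energy bound. The strict version of (\ref{R_condition}) makes the leading block invertible, so the standard Schur-complement criterion reduces positive semi-definiteness to $I-S^TS-S^TR(I-R^TR)^{-1}R^TS\ge0$; expanding $(I-R^TR)^{-1}$ as the Neumann series $\sum_{k\ge0}(R^TR)^k$, convergent precisely because $I-R^TR>0$ forces the spectral radius of $R^TR$ below one, and using $S^TR=(R^TS)^T$, turns this into (\ref{S_condition}).

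For the weak claims I would apply the energy method to the lifted equation (\ref{eq:nonlin_lif}) and use the defining property $\int \phi^T L_C(\psi)\,d\Omega=\oint \phi^T\psi\,ds$ to convert the penalty volume integral from (\ref{Pen_term_Gen_BC_form}) into a surface integral. The key simplification is the identity $U^T(I^-T^{-1})^T=(W^-)^T$, which follows from $T^{-1}U=W$ and the symmetry of the indicator $I^-$; combined with the choice $\Sigma=\sqrt{|\Lambda^-|}$ from (\ref{Sigma_condition}) and $m^T=(W^-)^T\sqrt{|\Lambda^-|}$, this collapses the penalty contribution to $2m^T(m-Rp-SG)$. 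Writing $r=Rp+SG$ and adding this to the inviscid term gives $p^Tp+m^Tm-2m^Tr=(m-r)^T(m-r)+\left(p^Tp-r^Tr\right)$. For claim~3 ($G=0$) the bracket is $p^T(I-R^TR)p\ge0$ and the leading term is a square, so (\ref{R_condition}) alone suffices. For claim~4 the leading square is again nonnegative, while the bracket $p^Tp-(Rp+SG)^T(Rp+SG)$ is exactly the expression shown in claim~2 to be bounded below by $-G^TG$; hence claims~2 and~3 together deliver the data bound, as the four-step list anticipates.

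The main obstacle I anticipate is claim~2: one must verify that the strict inequality in (\ref{R_condition}) is genuinely needed, both to invert $I-R^TR$ in the Schur complement and to guarantee convergence of the Neumann series, and that the reduction is carried out on the correct block so that the off-diagonal $-R^TS$ reappears with the right transpose to reproduce the symmetric term $(R^TS)^T\big(\sum_k(R^TR)^k\big)(R^TS)$ in (\ref{S_condition}). By contrast the weak claims are essentially mechanical once the lifting identity and the choice $\Sigma=\sqrt{|\Lambda^-|}$ are in place, since completing the square sends them straight back to the same algebra; the only care needed there is the bookkeeping of the rotation $U=TW$ and the restriction of $\sqrt{|\Lambda^-|}$ to the strictly-negative subspace where it is invertible.
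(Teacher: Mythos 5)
Your proposal is correct and follows essentially the same route as the paper's own proof: the same substitution into $(\sqrt{\Lambda^+}W^+, \sqrt{|\Lambda^-|}W^-)$ variables, the same completion of the square and $2\times 2$ block quadratic form with $G^TG$ added and subtracted, and the same Schur-complement reduction whose inverse $(I-R^TR)^{-1}$ is expanded as the Neumann series to yield (\ref{S_condition}). Your normalization $p=\sqrt{\Lambda^+}W^+$, $m=\sqrt{|\Lambda^-|}W^-$ and the explicit spectral-radius justification of the series are merely cleaner bookkeeping for the identical argument.
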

 \textcolor{blue}{\begin{remark}\label{explain_notation_nonstadard} 
With a slight abuse of notation we have used t$A \geq 0$ to indicate that the matrix $A$ is positive semi-definite and also written that if the inequality holds strictly such that  $A > 0$ , it indicates positive definiteness.
We have used this notation in (\ref{R_condition}) and (\ref{S_condition}) and will continue to use it below.
\end{remark}}

\begin{proof} We proceed in the step-by-step manner described above.

{\it 1. The homogeneous boundary condition (\ref{Gen_BC_form}) implemented strongly} (with $G=0$) lead to 
\begin{equation}\label{R1_derivation}
W^T   \Lambda   W  = (W^+)^T \Lambda^+ W^+ - (W^-)^T |\Lambda^-| W^- =  (\sqrt{\Lambda^+}W^+)^T (I-R^T R)  (\sqrt{\Lambda^+}W^+)  \geq 0
%(W^+)^T ( \Lambda^+ - R^T  |\Lambda^- |  R)W^+.
\end{equation}
if condition (\ref{R_condition}) holds. 
%This determines the bounds on matrix $R$
%an energy bounded solution as defined in Proposition \ref{lemma:Matrixrelation}.

{\it 2. The inhomogeneous boundary condition (\ref{Gen_BC_form}) implemented strongly} (with $G\neq0$)  lead to
\begin{equation}\label{S1_derivation}
 W^T   \Lambda   W  = (\sqrt{\Lambda^+}W^+)^T (\sqrt{\Lambda^+}W^+) -  (R (\sqrt{\Lambda^+}W^+) + S G)^T (R (\sqrt{\Lambda^+}W^+) + S G).
\end{equation}
Expanding (\ref{S1_derivation}), adding and subtracting $G^T G$ lead to the result
\begin{equation}
\label{estimate_2}
 W^T   \Lambda   W =
\begin{bmatrix}
\sqrt{\Lambda^+} W^+ \\
G
\end{bmatrix}^T
\begin{bmatrix}
I- R^T R & - R^T S \\
-S^T R  &  I-S^T S
\end{bmatrix}
\begin{bmatrix}
\sqrt{\Lambda^+} W^+ \\
G
\end{bmatrix} - G^T G. 
\end{equation}
The matrix in (\ref{estimate_2}) can be rotated into block-diagonal form with the upper left block preserved and the lower right block being
$I- S^T  S - (R^T S)^T\left( I- R^T R \right)^{-1}  (R^T S)$ since (\ref{R_condition}) \textcolor{green}{holds strictly. Next we choose $S$ such that (\ref{S_condition}) holds which} guarantees that the left term in (\ref{estimate_2}) is non-negative.
%\begin{equation}\label{S_condition-derivation}
%I- S^T  S - (R^T S)^T\left( I- R^T R \right)^{-1}  (R^T S)=I- S^T  S - (R^T S)^T\left(\sum^{\nifty}_{k=0}(R^T R)^k \right) (R^T S), 
%S =   \tilde S^{-1} |\Lambda^- |^{1/2} \ \mbox{with 
%$\tilde S$ sufficiently small in an absolute sense.}
%\end{equation}
%which is bounded from below by the data if (\ref{R_condition}) is satisfied with strict inequality and condition (\ref{S_condition}) holds.
%\begin{equation}
%\begin{bmatrix}
%\Lambda^+ - R^T  |\Lambda^- |  R & - R^T  |\Lambda^- |^{-1/2} \tilde S \\
%-\tilde S^T  |\Lambda^- |^{-1/2} R  &  I-\tilde S^T \tilde S
%\end{bmatrix} \geq 0,
%\end{equation}

{\it 3. The homogeneous boundary condition (\ref{Gen_BC_form}) implemented weakly} (with $G=0$)  using the lifting operator in (\ref{Pen_term_Gen_BC_form}) and applying the energy method lead to the boundary term
\begin{equation}\label{Sigma1_derivation}
 W^T \Lambda   W  + 2 U^T  ( I^-T^{-1})^T \Sigma (\sqrt{|\Lambda^-|}W^- - R \sqrt{\Lambda^+}W^+))= W^T   \Lambda   W + 2 (W^-)^T  \Sigma (\sqrt{|\Lambda^-|}W^- - R \sqrt{\Lambda^+}W^+). \nonumber
 \end{equation}
% where we let $\Sigma=\Sigma^T.$ 
 Collecting similar terms and making the choice (\ref{Sigma_condition}) for  $\Sigma$ transforms the right hand side to
\begin{equation}\label{Sigma2_derivation}
 (\sqrt{\Lambda^+}W^+)^T  (\sqrt{\Lambda^+}W^+) +( \sqrt{|\Lambda^-|}W^-)^T ( \sqrt{|\Lambda^-|}W^-) -2 ( \sqrt{|\Lambda^-|}W^-)^T R (\sqrt{\Lambda^+}W^+).
\end{equation}
Adding and subtracting  $(R \sqrt{\Lambda^+}W^+)^T  (R \sqrt{\Lambda^+}W^+)$ and using (\ref{R_condition}) guarantee positivity of (\ref{Sigma2_derivation}) since
\begin{equation}\label{Sigma4_derivation}
  (\sqrt{\Lambda^+}W^+)^T (I - R^T R)(\sqrt{\Lambda^+}W^+) + (\sqrt{|\Lambda^-|}W^--R \sqrt{\Lambda^+} W^+)^T (\sqrt{|\Lambda^-|}W^--R \sqrt{\Lambda^+} W^+) \geq 0.
\end{equation}
%which lead to positive semi-definite boundary term by using condition (\ref{R_condition}).
% to an energy bounded solution as defined in Proposition \ref{lemma:Matrixrelation} if (\ref{R_condition}) holds.

{\it 4. The inhomogeneous boundary condition (\ref{Gen_BC_form}) implemented weakly} (with $G\neq0$)  using the lifting operator in (\ref{Pen_term_Gen_BC_form}) and the choice $\Sigma$ in (\ref{Sigma_condition}) lead to the boundary terms
%By choosing  $R$ as in (\ref{R_condition}),  $S$ as in (\ref{S_condition}) and $\Sigma$ as in (\ref{Sigma_condition}) followed by adding and subtracting $G^TG$ provides the following boundary terms in the weak inhomogeneous case
%\[
\begin{equation}\label{Sigma2_derivation_new}
W^T \Lambda   W   + 2 (\sqrt{|\Lambda^-|}W^-)^T  (\sqrt{|\Lambda^-|} W^- - R \sqrt{\Lambda^+}W^+  -  SG)).
\end{equation}
%\]
By adding and subtracting $G^T G $  and rearranging, the boundary terms (\ref{Sigma2_derivation_new}) above can be written as
%\begin{equation}\label{final _gen_weak_result_1}
%( \sqrt{\Lambda^+}W^+)^T (I - R^T R) ( \sqrt{\Lambda^+}W^+) +   (W^- - R W^+)^T  |\Lambda^- | (W^- - R W^+) - 2 (W^-)^T  |\Lambda^- |  S^{-1}G.
%\end{equation}
%The first term in (\ref{final _gen_weak_result_1}) is positive if condition (\ref{R_condition}) on $R$ holds with strict inequality. 
%By rearranging  (\ref{final _gen_weak_result_1}) we find that it is equivalent to
%\begin{equation}\label{final _gen_weak_result_2}
%(W^+)^T (\Lambda^+ - R^T  |\Lambda^- | R) W^+ +   (W^- - R W^+)^T  |\Lambda^- | (W^- - R W^+) - 2 (W^-)^T  |\Lambda^- |  S^{-1}G.
%\end{equation}
%\begin{equation}\label{final _gen_weak_result_1}
%W^T \Lambda   W &+ 2 U^T  ( E^-(NT)^{-1})^T \Sigma (W^- - R W^+ - S^{-1} G))=W^T \Lambda   W + 2 (W^-)^T  |\Lambda^- | (W^- - R W^+  -  S^{-1}G))
%\end{equation}
%\begin{equation}\nonumber
%\label{final _gen_weak_result_2}
% (\sqrt{|\Lambda^-|}W^- - R  \sqrt{\Lambda^+}W^+ - SG)^T  (\sqrt{|\Lambda^-|}W^- - R  \sqrt{\Lambda^+}W^+ - SG) + 
% \begin{bmatrix}
%\sqrt{\Lambda^+} W^+ \\
%G
%\end{bmatrix}^T
%\begin{bmatrix}
%I- R^T R & - R^T S \\
%-S^T R  &  I-S^T S
%\end{bmatrix}
%\begin{bmatrix}
%\sqrt{\Lambda^+} W^+ \\
%G
%\end{bmatrix}
%\end{equation}
\begin{align}
& (\sqrt{|\Lambda^-|}W^- - R  \sqrt{\Lambda^+}W^+ - SG)^T  (\sqrt{|\Lambda^-|}W^- - R  \sqrt{\Lambda^+}W^+ - SG) + \nonumber \\ 
&  \begin{bmatrix}
\sqrt{\Lambda^+} W^+ \\
G
\end{bmatrix}^T
\begin{bmatrix}
I- R^T R & - R^T S \\
-S^T R  &  I-S^T S
\end{bmatrix}
\begin{bmatrix}
\sqrt{\Lambda^+} W^+ \\
G
\end{bmatrix} - G^TG \label{generic_discrete_BT_lifting_details_2}
\end{align}
The first term is obviously positive semi-definite and the remaining part is identical to the term in (\ref{estimate_2}).
\end{proof}
\begin{remark}
Lemma \ref{lemma:GenBC} can be used to prove that the estimates (\ref{1Dprimalstab}) and (\ref{1Dprimalstab_strong}) in Proposition \ref{lemma:Matrixrelation} holds.
\end{remark}

%%%%%%%%%%%%%%%%%%%%%%%%%%%%%%%%%%%%%%%%%%%%%%%%%%%%%%%%%%%%%%%%%%%%%%%%
%\subsection{The general form of nonlinear boundary conditions in original variables}\label{nonlinear_BC_orig}
%%%%%%%%%%%%%%%%%%%%%%%%%%%%%%%%%%%%%%%%%%%%%%%%%%%%%%%%%%%%%%%%%%%%%%%%
We are now ready to connect the characteristic boundary condition formulation (\ref{Gen_BC_form}) with (\ref{eq:nonlin_BC}) in the original variables.  The definitions $W=T^{-1}U$, $W^-=I^-W$, $W^+=I^+W$, $\sqrt{|\Lambda^-|}=I^- \sqrt{|\Lambda|}$ and $\sqrt{\Lambda^+}=I^+ \sqrt{|\Lambda|}$ transforms  (\ref{Gen_BC_form})  to
\begin{equation}\label{Gen_BC_form_expanded}
S^{-1} (I^- - R I^+) \sqrt{|\Lambda|}T^{-1}U =G  \quad  \text{and hence} \quad  B=S^{-1} (I^- - R I^+) \sqrt{|\Lambda|}T^{-1}.
\end{equation}
%By comparing (\ref{eq:nonlin_BC}) and (\ref{Gen_BC_form_expanded}), the original boundary operator can be identified as
%\begin{equation}\label{eq:nonlin_BC_details}
%L=|\Lambda^- |^{1/2} (J^- - R J^+)T^{-1}   \ \mbox{and}   \  g= \tilde S G
%\end{equation}
%respectively. 
This concludes the analysis of the formulation of nonlinear boundary conditions for first derivative terms.
\textcolor{red}{\begin{remark}
The new boundary procedure presented above generalise the well known linear characteristic boundary procedure  \cite{kreiss1970,MR436612} by inserting the additional scaling with the solution dependent eigenvalues. We show below that the new nonlinear boundary procedure can also be used to bound IBVPs involving second derivatives similar to what has been done in the linear case \cite{nordstrom2005,MR1339182,MR1669660}.
%\cite{MR1339182,MR1669660,MR2177803}. 
\end{remark}}

%%%%%%%%%%%%%%%%%%%%%%%%%%%%%%%%%%%%%%%%%%%%%%%%%%%%%%%%%%%%%%%%%%%%%%%%
\subsection{Extension of the analysis to include second derivative terms}\label{nonlinear_BC_visc}
%%%%%%%%%%%%%%%%%%%%%%%%%%%%%%%%%%%%%%%%%%%%%%%%%%%%%%%%%%%%%%%%%%%%%%%%
The boundary terms to consider in the case when $\epsilon >  0$  are given in Proposition \ref{lemma:Matrixrelation_complemented}. The argument in the surface integral
can be reformulated into the first derivative setting.  By denoting the symmetric part of $n_i A_i$ as $\tilde A$ and introducing the notation $n_i D_i/2=\tilde F$ for the viscous flux we rewrite the boundary terms as
\begin{equation}\label{newflux_form_1}
U^T  (n_i A_i) U -   \epsilon U^T (n_i D_i) =  U^T \tilde A U -  \epsilon U^T  \tilde F - \epsilon \tilde F^T  U=
\begin{bmatrix}
U \\
\epsilon \tilde F
\end{bmatrix}^T
\begin{bmatrix}
\tilde A & - I \\
 - I &  0
\end{bmatrix}
\begin{bmatrix}
U \\
 \epsilon \tilde F
\end{bmatrix}
% (U-\epsilon  \tilde A^{-1} \tilde F)^T \tilde A (U-\epsilon  \tilde A^{-1} \tilde F)-(\epsilon  \tilde A^{-1} \tilde F)^T  \tilde A  (\epsilon  \tilde A^{-1} \tilde F).
\end{equation}
where $I$ is the identity matrix. We can now formally diagonalise the boundary terms in (\ref{newflux_form_1}), apply the boundary condition (\ref{Gen_BC_form}) and  Lemma \ref{lemma:GenBC} to obtain energy bounds.

However, it is instructive the take the analysis one step further. We start by transforming the boundary terms (\ref{newflux_form_1})  to block-diagonal  form (assuming that $\tilde A$ is non-singular)
\begin{equation}\label{newflux_form_2}
\begin{bmatrix}
U \\
\epsilon \tilde F
\end{bmatrix}^T
\begin{bmatrix}
\tilde A & -I \\
 - I &  0
\end{bmatrix}
\begin{bmatrix}
U \\
 \epsilon \tilde F
\end{bmatrix}
=
 \begin{bmatrix}
U - \epsilon  \tilde A^{-1} \tilde F\\
     \epsilon  \tilde F
\end{bmatrix}^T
\begin{bmatrix}
\tilde A & 0 \\
          0 & -  \tilde A^{-1} 
\end{bmatrix}
\begin{bmatrix}
U - \epsilon  \tilde A^{-1}  \tilde F\\
 \epsilon \tilde F
\end{bmatrix}.
\end{equation}
Next, we insert the previous transformations  $\tilde A=(T^{-1})^T  \Lambda T^{-1}$ and $W=T^{-1} U$ into (\ref{newflux_form_2}) to get
\begin{equation}\label{matrix-vector_1}
 \begin{bmatrix}
 W - \epsilon  \Lambda^{-1}  T^T  \tilde F\\
                 \epsilon  T^T  \tilde F
\end{bmatrix}^T
\begin{bmatrix}
\Lambda & 0 \\
          0 & -   \Lambda^{-1}
\end{bmatrix}
\begin{bmatrix}
W - \epsilon  \Lambda^{-1}  T^T  \tilde F \\
                \epsilon  T^T  \tilde F
\end{bmatrix}.
\end{equation}
Finally we move the factor $ \Lambda^{-1}$ from the matrix into the dependent vectors as shown below
\begin{equation}\label{matrix-vector_False}
\tilde W^T \tilde  \Lambda \tilde W
=
 \begin{bmatrix}
W  - \epsilon  \Lambda^{-1}   T^T  \tilde F\\
              \,\ \quad - \epsilon   \Lambda^{-1}  T^T  \tilde F
\end{bmatrix}^T
\begin{bmatrix}
\Lambda & 0 \\
          0 & -   \Lambda
\end{bmatrix}
\begin{bmatrix}
W - \epsilon  \Lambda^{-1}   T^T  \tilde F \\
             \,\ \quad  - \epsilon \Lambda^{-1}  T^T  \tilde F
\end{bmatrix},
\end{equation}
which results in the same formal setting as for the first derivative problems. 
\begin{remark}
If there are no hidden linear dependencies in (\ref{matrix-vector_False}), then we can directly apply the boundary condition (\ref{Gen_BC_form}) and  Lemma \ref{lemma:GenBC} to obtain energy bounds.  However, for incompletely parabolic equations, the vector $\tilde F$ is shorter than $W$ and reduce the number of boundary conditions below the number indicated by the diagonal entries in $\tilde \Lambda$. We will discuss this phenomenon in detail for the INSEs below. 
\end{remark}

%%%%%%%%%%%%%%%%%%%%%%%%%%%%%%%%%%%%%%%%%%%%%%%%%%%%%%%%%%%%%%%%%%%%%%%%
\section{Part 3: Semi-discrete nonlinear stability}\label{numerics}
%%%%%%%%%%%%%%%%%%%%%%%%%%%%%%%%%%%%%%%%%%%%%%%%%%%%%%%%%%%%%%%%%%%%%%%%
As in the scalar case in Section \ref{numerics_il}, we show how the derived boundary conditions and penalty parameters that lead to a nonlinear energy bound also lead to nonlinear energy stability in the nonlinear system case.
Consider the extended version (\ref{eq:nonlin_lif}) of (\ref{eq:nonlin}) rewritten (with Einsteins summation convention) here for clarity
%augmented with  (\ref{eq:boundcond}) and a lifting operator added, 
%\begin{equation}\label{eq:stab_eq}
%2 P \Phi_t + (\tilde A  \Phi)_x + \tilde A^T  \Phi_x + (\tilde B  \Phi)_y + \tilde B^T  \Phi_y=0 
%(\tilde A  \Phi)_x + \tilde A^T  \Phi_x = 2 P A  \Phi_x,  \quad (\tilde B  \Phi)_y + \tilde B^T  \Phi_y = 2 P B  \Phi_y.
%\end{equation}
%to 
\begin{equation}\label{eq:stab_eq}
P U_t + (A_i U)_{x_i}+A^T_i U_{x_i}+CU+L_C=0,  \quad t \geq 0,  \quad  \vec x=(x_1,x_2,..,x_k) \in \Omega.
%(\tilde A  \Phi)_x + \tilde A^T  \Phi_x = 2 P A  \Phi_x,  \quad (\tilde B  \Phi)_y + \tilde B^T  \Phi_y = 2 P B  \Phi_y.
\end{equation}
 Equation (\ref{eq:stab_eq}) is augmented with the initial condition $U(\vec x,0)=F(\vec x)$ in $\Omega$ and boundary conditions of the form (\ref{Gen_BC_form}) on $\partial\Omega$.  Furthermore $A_i=A_i(U)$, $C=C(U)$  and $P$ are  $n \times n$ matrices while $U$ and $L_C$ are $n$ vectors.  $L_C$ is the continuous lifting operator of the form (\ref{Pen_term_Gen_BC_form}) implementing the boundary conditions weakly. 
%Equation (\ref{eq:stab_eq}) could for example represent the IEEs, the SWEs or the CEEs.
 A straightforward approximation of (\ref{eq:stab_eq}) on SBP-SAT form in $M$ nodes is
\begin{equation}\label{EUL_Disc}
(P \otimes I_M) \vec U_t+{\bf D_{x_i}} {\bf A_i}  \vec U+{\bf A_i^T} {\bf D_{x_i}} \vec U   + {\bf C}  \vec U + {\vec L_D}=0, \quad \vec U(0) = \vec F
\end{equation}
%${\bf P} = P \otimes I_x   \otimes I_y$
where $\vec U=(\vec U_1^T, \vec U_2^T,...,\vec U_n^T)^T$ include approximations of  $U=(U_1,U_2,...,U_n)^T$ in each node. The discrete lifting operator ${\vec L_D} (\vec U)$ implements the boundary conditions in a similar way to  $L_C(U)$ and $\vec F$ denotes the discrete initial data with the continuous initial data injected in the nodes. The matrix elements of ${\bf A_i},{\bf C}$ are matrices with node values of the matrix elements in $A_i,C$ injected on the diagonals as exemplified below
\begin{equation}
\label{illustration}
A_i=
\begin{bmatrix}
      a_{11}   &  \ldots  & a_{1n} \\
       \vdots   & \ddots & \vdots \\
       a_{n1} &  \ldots  & a_{nn}
\end{bmatrix}, \quad
{\bf A_i} =
\begin{bmatrix}
      {\bf a_{11}}   &  \ldots  &  {\bf a_{1n}}  \\
       \vdots          & \ddots &  \vdots           \\
        {\bf a_{n1}} &  \ldots &  {\bf a_{nn}} 
\end{bmatrix}, \quad
{\bf a_{ij}} =diag(a_{ij}(x_1,y_1), \ldots, a_{ij}(x_M,y_M)).
\end{equation}
%The boundary conditions (imposed through SAT terms or numerical flux functions) are assumed to be dissipative and ignored (we focus on the energy conserving properties of the numerical scheme).
Moreover ${\bf D_{x_i}}=I_n \otimes D_{x_i}$ where $\otimes$ denotes the Chronicler product, $I_n$ is the  $n \times n$ identity matrix, $D_{x_i}=P_{\Omega}^{-1} Q_{x_i}$ are SBP difference operators, $P_{\Omega}$ is a positive definite diagonal volume quadrature matrix that defines a scalar product and norm such that  
\begin{equation}
\label{volumenorm}
(\vec U, \vec V)_{\Omega} = \vec U^TP_{\Omega}  \vec V \approx  \int\limits_{\Omega}U^T V d \Omega, \quad \text{and}  \quad  (\vec U, \vec U)_{\Omega} = \| \vec U\|^2_{\Omega}  = \vec U^TP_{\Omega}  \vec U \approx  \int\limits_{\Omega}U^T U d \Omega = \| U\|^2_{\Omega} .
\end{equation}
%Following \cite{Lundquist201849} we let the matrices $Q_{x,y}$ satisfies the SBP constraints $Q_{x,y}+Q_{x,y}^T=E^T P_{\partial\Omega} N_{x,y} E$ where the matrix $E$ projects All matrices above have appropriate sizes such that the matrix-matrix and matrix-vector operations are defined. 

Following \cite{Lundquist201849} we introduce the discrete normal $\mathbf{N}=(N_1,N_2,...,N_k)$ approximating the continuous normal $\mathbf{n}=(n_1,n_2,...,n_k)$ in the $N$ boundary nodes and a restriction operator $E$ that extracts the boundary values $E\vec U$ from the total values. We also need a positive definite diagonal boundary quadrature $P_{\partial\Omega}=diag(ds_1,ds_2,...,ds_N)$ such that $\oint_{\partial \Omega}  U^T U d.s. \approx (E \vec U)^T P_{\partial\Omega} (\vec U)= (EU)_i^2 ds_i$. With this notation in place (again using Einsteins summation convention), the SBP constraints for a scalar variable becomes
\begin{equation}
\label{SBP_constraint}
 Q_{x_i}+Q_{x_i}^T=E^T P_{\partial\Omega} N_{i} E, 
 %\quad \text{and}  \quad  Q_{y}+Q_{y}^T=E^T P_{\partial\Omega} N_{y} E.
\end{equation}
which leads to the scalar summation-by-parts formula mimicking integration-by-parts
\begin{equation}
\label{SBP_scalar_relation}
(\vec U,  D_{x_i}  \vec V) = \vec U^T  P_{\Omega} (D_{x_i}  \vec V) = - ( D_{x_i}  \vec U, \vec V)  + (E \vec U)^T P_{\partial\Omega} N_{i} (E \vec V).
 %\quad \text{and}  \quad  Q_{y}+Q_{y}^T=E^T P_{\partial\Omega} N_{y} E.
\end{equation}
The scalar SBP relations in (\ref{SBP_constraint}),(\ref{SBP_scalar_relation}), correspond to the SBP formulas for a vector with $n$ variables as
\begin{equation}\label{Multi-SBP}
(\vec U, {\bf D_{x_i}} \vec V)=\vec U^T  (I_n   \otimes P_{\Omega})({\bf D_{x_i}} \vec V)= -({\bf D_{x_i}} \vec U, \vec V) + (E\vec U)^T  (I_n   \otimes P_{\partial\Omega}) N_{i}  (E\vec V).
%\vec U^T \tilde {\bf P} {\bf D_y} \vec V= -({\bf D_y} \vec U)^T  \tilde {\bf P} \vec V + \vec U^T {\bf B_y} \vec V,
\end{equation}
%and similar for the ${\bf D_y}$ operator in the $y-$direction. 
%We are now ready to derive a semi-discrete energy estimate.

It remains to construct the discrete lifting operator $\vec L_D$ (often called the SAT term  \cite{svard2014review,fernandez2014review}) such that we can reuse the continuous analysis. We consider an operator of the form $\vec L_D =(I_n \otimes P_{\Omega})(DC) \vec L_C$.
%\begin{equation}\label{generic_discrete_BT_lifting}
%\vec L_D =(I_n \otimes P_{\Omega})(D2C) \vec L_C.
%\end{equation}
The transformation matrix $DC$ first extracts the boundary nodes from the volume nodes, secondly permute the dependent variables from being organised as $(E\vec U_1, E\vec U_2,..,E\vec U_n)^T$  to $((E \vec U)_1, (E \vec U)_2,.., (E \vec U)_N)^T$ using the permutation matrix $P_{erm}$ 
%\begin{equation}
%E\vec U_1, E\vec U_2,...,E\vec U_n)^T \quad \text{to} \quad  ((E \vec U)_1, (E \vec U)_2,..., (E \vec U)_N)^T,
%\end{equation}
and thirdly numerically integrate the resulting vector against the continuous lifting operator $ \vec L_C$ (now applied to the discrete solution).  More specifically we have
\begin{align}
&\vec L_D=(I_n \otimes P_{\Omega})^{-1})(DC) \vec L_C&  &\ \ \ DC=(I_n \otimes E^T) (P_{erm}) ^T (P_{\partial\Omega} \otimes I_n),& \label{generic_discrete_BT_lifting_details_1} \\ 
&\vec L_C=((L_C)_1,(L_C)_2,...,(L_C)_N)^T&             &(L_C)_j=(2 (J^- T^{-1})^T  \Sigma (\sqrt{|\Lambda^-|} \vec W^--R \sqrt{\Lambda^+} \vec W^+ - S \vec G))_j.& \label{generic_discrete_BT_lifting_details_2}
\end{align}

%%%%%%%%%%%%%%%%%%%%%%%%%%%%%%%%%%%%%%%%%%%%%%%%%%%%%%%%%%%%%%%%%%%%%%%%
%\subsection{Nonlinear stability}\label{numerics_energy}
%%%%%%%%%%%%%%%%%%%%%%%%%%%%%%%%%%%%%%%%%%%%%%%%%%%%%%%%%%%%%%%%%%%%%%%%
We can now prove the semi-discrete correspondence to Proposition \ref{lemma:Matrixrelation}.
\begin{proposition}\label{lemma:Matrixrelation_discrete}
Consider the nonlinear scheme (\ref{EUL_Disc}) with $\vec L_D $ defined in (\ref{generic_discrete_BT_lifting_details_1}) and  (\ref{generic_discrete_BT_lifting_details_2}).

 It is nonlinearly stable for $\vec G=0$ if the relations (\ref{R_condition}) and (\ref{Sigma_condition}) in Lemma \ref{lemma:GenBC} hold and the solution satisfies the estimate 
\begin{equation}\label{1Dprimalstab_disc}
\| \vec U \|_{P  \otimes P_{\Omega}}^2 \leq \| \vec F \|_{P  \otimes P_{\Omega}}^2.
\end{equation}

It is strongly nonlinearly stable for $\vec G \neq 0$ if the relations (\ref{R_condition}),(\ref{S_condition}) and (\ref{Sigma_condition}) in Lemma \ref{lemma:GenBC} hold and the solution satisfies the estimate 
\begin{equation}\label{1Dprimalstab_strong_disc}
\| \vec U \|_{P  \otimes P_{\Omega}}^2 \leq \| \vec F \|_{P  \otimes P_{\Omega}}^2 +  2 \int_0^t  \sum_{j=1,N} \lbrack \vec G^T \vec G) \rbrack_j ds_j \ dt.
\end{equation}
In (\ref{1Dprimalstab_disc}) and (\ref{1Dprimalstab_strong_disc}), $\vec F$ and $\vec G$ are external data from $F$ and $G$ injected in the nodes. 
%independent of the solution $\vec U$.
% $t  \rightarrow  \nifty$.
%where $(n_1,..,n_k)^T$ is the outward pointing unit normal. 
\end{proposition}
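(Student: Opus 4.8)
The plan is to run the discrete energy method on (\ref{EUL_Disc}) so that it reproduces, term by term, the continuous computation behind Proposition~\ref{lemma:Matrixrelation}, and then to invoke Lemma~\ref{lemma:GenBC} at each boundary node. I would multiply (\ref{EUL_Disc}) from the left by $\vec U^T(I_n\otimes P_\Omega)$. For the time-derivative term the Kronecker mixed-product rule gives $(I_n\otimes P_\Omega)(P\otimes I_M)=P\otimes P_\Omega$, so this term becomes $\tfrac12\tfrac{d}{dt}\|\vec U\|^2_{P\otimes P_\Omega}$, exactly the quantity appearing in (\ref{1Dprimalstab_disc}) and (\ref{1Dprimalstab_strong_disc}); since $P$ is symmetric positive semi-definite and $P_\Omega$ positive definite, $P\otimes P_\Omega$ is a legitimate (semi-)norm, consistent with the continuous (semi-)norm bound.

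Next I would eliminate the volume terms. Because the coefficient matrices ${\bf A_i},{\bf C}$ are built from diagonal node-evaluations as in (\ref{illustration}), they commute with the diagonal weight $I_n\otimes P_\Omega$; hence ${\bf A_i}$ is self-adjoint in the discrete inner product and $(\vec U,{\bf A_i^T}{\bf D_{x_i}}\vec U)=({\bf D_{x_i}}\vec U,{\bf A_i}\vec U)$. Applying the multi-variable SBP identity (\ref{Multi-SBP}) to $(\vec U,{\bf D_{x_i}}{\bf A_i}\vec U)$ returns $-({\bf D_{x_i}}\vec U,{\bf A_i}\vec U)$ plus a boundary term; the two volume parts cancel \emph{exactly}, which is the discrete counterpart of the skew-symmetry $B_i=A_i^T$ in (\ref{eq:boundcond}). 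The term $(\vec U,{\bf C}\vec U)$ vanishes node-by-node because $C+C^T=0$ holds pointwise and $P_\Omega$ is diagonal, so each node contributes $(P_\Omega)_{mm}\,U(x_m)^T C(x_m)U(x_m)=0$. What survives is precisely $\sum_i(E\vec U)^T(I_n\otimes P_{\partial\Omega})N_i(E{\bf A_i}\vec U)$, which, since $P_{\partial\Omega}$ is diagonal with positive weights $ds_j$, equals $\sum_j ds_j\,U(x_j)^T(N_iA_i)U(x_j)$ --- the continuous integrand $U^T(n_iA_i)U$ sampled at the boundary nodes.

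I would then reduce the lifting contribution $(\vec U,\vec L_D)$ to the same boundary sum: by the construction in (\ref{generic_discrete_BT_lifting_details_1}) the weight $I_n\otimes P_\Omega$ cancels against its inverse in $\vec L_D$, while the operator $DC$ restricts to the boundary, permutes into node ordering, and applies $P_{\partial\Omega}$, producing $\sum_j ds_j(E\vec U)_j^T(L_C)_j$ with $(L_C)_j$ as in (\ref{generic_discrete_BT_lifting_details_2}). The decisive observation is that at each boundary node the objects $T,\Lambda,R,S,\Sigma$ and the rotated variables $W$ are exactly the pointwise quantities of Section~\ref{nonlinear_BC_char} after symmetrising $N_iA_i$ and diagonalising to $W^T\Lambda W$. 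Consequently the combined boundary-plus-lifting quadratic form at node $j$ is algebraically identical, up to the positive factor $ds_j$, to the expression treated in the weak parts \textbf{3} and \textbf{4} of the proof of Lemma~\ref{lemma:GenBC}, namely (\ref{Sigma2_derivation_new}) reorganised into the manifestly nonnegative square of (\ref{Sigma4_derivation}) together with the matrix (\ref{estimate_2}). Summing over nodes: for $\vec G=0$, conditions (\ref{R_condition}) and $\Sigma=\sqrt{|\Lambda^-|}$ from (\ref{Sigma_condition}) make every node contribution positive semi-definite, so $\tfrac{d}{dt}\|\vec U\|^2_{P\otimes P_\Omega}\le 0$ and integration in time gives (\ref{1Dprimalstab_disc}); for $\vec G\neq0$, the strict form of (\ref{R_condition}) with (\ref{S_condition}) and (\ref{Sigma_condition}) bounds each node below by $-[\vec G^T\vec G]_j$, whence $\tfrac12\tfrac{d}{dt}\|\vec U\|^2_{P\otimes P_\Omega}\le\sum_j ds_j[\vec G^T\vec G]_j$ and time integration produces the factor-two estimate (\ref{1Dprimalstab_strong_disc}).

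The main obstacle, and the step to execute with care, is showing that the semidiscretisation reproduces the continuous boundary algebra \emph{exactly}, so that Lemma~\ref{lemma:GenBC} applies verbatim node-by-node rather than only up to truncation error. This rests on two structural facts that I would verify explicitly: that the coefficient matrices enter as diagonal node-evaluations (guaranteeing the commutation with $P_\Omega,P_{\partial\Omega}$ and the exact cancellation of the volume terms via the split form ${\bf D_{x_i}}{\bf A_i}+{\bf A_i^T}{\bf D_{x_i}}$), and that $P_\Omega$ and $P_{\partial\Omega}$ are \emph{diagonal}, which is what decouples the boundary sum into independent per-node quadratic forms each carrying a positive weight $ds_j$. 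Once the per-node identification with (\ref{estimate_2}) and (\ref{Sigma4_derivation}) is made, the two estimates follow immediately from Lemma~\ref{lemma:GenBC}.
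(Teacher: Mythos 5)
Your proposal is correct and follows essentially the same route as the paper's own proof: the discrete energy method with the multiplier $\vec U^T(I_n\otimes P_\Omega)$, commutation of the diagonal-block matrices ${\bf A_i}$, ${\bf C}$ with the quadrature, the SBP identity (\ref{Multi-SBP}) to cancel the volume terms, and the reduction (via diagonal $P_{\partial\Omega}$ and $W=T^{-1}(E\vec U)$) of the boundary-plus-lifting contribution to per-node quadratic forms handled by Lemma \ref{lemma:GenBC}, followed by time integration. The only cosmetic slip is describing ${\bf A_i}$ as self-adjoint; what you actually use, and display correctly, is that ${\bf A_i^T}$ moves across the $(I_n\otimes P_\Omega)$-inner product onto the other argument, exactly as in the paper.
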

\begin{proof}
The discrete energy method (multiply (\ref{EUL_Disc}) from the left with  $\vec U^T (I_n \otimes P_{\Omega}$)) yields 
\begin{equation}\label{Disc_energy_initial}
\vec U^T (P  \otimes P_{\Omega})  \vec U_t+ (\vec U,  {\bf D_{x_i}} {\bf A_i}  \vec U)+  ({\bf A_i} \vec U, {\bf D_{x_i}} \vec U)+(\vec U,{\vec L_D)}=0,                                                       
\end{equation}
%\begin{equation}\nonumber
%\vec U^T (P  \otimes P_{\Omega})  \vec U_t+ \vec U^T  (I_n  \otimes Q_x) {\bf A}  \vec U+  ({\bf A} \vec U)^T (I_n  \otimes Q_x) \vec U+ \vec U^T  (I_n  \otimes Q_y) {\bf B}  \vec U+  ({\bf B} \vec U)^T (I_n  \otimes Q_y) \vec U+\vec U^T (I_n  \otimes P_{\Omega}){\vec L_D}=0                                                       
%\end{equation}
where we have used that $(I_n \otimes P_{\Omega})$ commutes with ${\bf A_i}$ (since the matrices have diagonal blocks) and that the symmetric part of  $C$ is zero. The SBP constraints (\ref{Multi-SBP}) and the notation $\vec U^T (P  \otimes P_{\Omega})  \vec U= \| \vec U \|_{P  \otimes P_{\Omega}}^2 $ simplifies (\ref{Disc_energy_initial}) to
\begin{equation}\label{Disc_energy_final}
\dfrac{1}{2} \dfrac{d}{dt} \| \vec U \|_{P  \otimes P_{\Omega}}^2 
+ \vec U^T (I_n  \otimes E^T P_{\partial\Omega} N_{i} E )  {\bf A_i} \vec U  + (\vec U,{\vec L_D}) =0.
\end{equation}

The semi-discrete energy rate in (\ref{Disc_energy_final}) mimics the continuous energy rate in the sense that only boundary terms remain. To make use of the already performed continuous energy analysis, we expand the boundary terms and exploit the diagonal form of  $P_{\partial\Omega}$ and the relation $W_i = (T^{-1}(E \vec U))_i $. The result is
\begin{equation}\label{Disc_energy_final_BT}
\vec U^T (I_n  \otimes E^T P_{\partial\Omega} N_{x_i} E )  {\bf A_i} \vec U = \sum_{j=1,N} \lbrack  (E  \vec U)^T (N_{i}  {\bf A_i})  (E  \vec U) \rbrack_j ds_j = \sum_{j=1,N}  \lbrack \vec W^T   \Lambda   \vec W \rbrack_jds_j.
\end{equation}
%The relation (\ref{Disc_energy_final_BT}) mimics the continuous result (\ref{1Dprimalstab_trans}) in each of the  $N$ boundary nodes. Next, the continuous transformation formula applied to the discrete solution yields $W_i = (T^{-1}(E \vec U))_i $ and hence
%\begin{equation}\label{discrete_BT_diagonal}
%\sum_{j=1,N}  \lbrack  (E  \vec U)^T (N_{i}  {\bf A_i})  (E  \vec U) \rbrack_j ds_j = \sum_{j=1,N}  \lbrack \vec W^T   \Lambda   \vec W \rbrack_jds_j.
%\end{equation}
The  discrete boundary terms in (\ref{Disc_energy_final_BT}) now have the same form as the continuous ones in (\ref{1Dprimalstab_trans_final}). By using  (\ref{generic_discrete_BT_lifting_details_1}) and (\ref{generic_discrete_BT_lifting_details_2}) we find that
\begin{equation}\label{discrete_BT_lifting}
(\vec U,{\vec L_D})= \sum_{j=1,N}  \lbrack 2  (\vec W^-)^T  \Sigma(\sqrt{|\Lambda^-|} \vec W^- - R \sqrt{\Lambda^+} \vec W^+ - S \vec G) \rbrack_j ds_j.
\end{equation}
The combination of  (\ref{Disc_energy_final})-(\ref{discrete_BT_lifting}) leads to the final form of the energy rate
\begin{equation}\label{Disc_energy_final_final}
\dfrac{1}{2} \dfrac{d}{dt} \| \vec U \|_{P  \otimes P_{\Omega}}^2 + \sum_{j=1,N}  \lbrack  \vec W^T   \Lambda \vec W + 2  (\vec W^-)^T   \Sigma(\sqrt{|\Lambda^-|} \vec W^- - R \sqrt{\Lambda^+} \vec W^+ - S \vec G) \rbrack_j  ds_j=0.
\end{equation}
By using (\ref{R_condition})-(\ref{Sigma_condition}) in Lemma \ref{lemma:GenBC}, the estimates (\ref{1Dprimalstab_disc}) and (\ref{1Dprimalstab_strong_disc}) follow by using the same technique that was used for the continuous estimates in the proof of Lemma \ref{lemma:GenBC}.
\end{proof}

\section{Application of the general theory to initial boundary value problems in CFD}\label{Examples}
%%%%%%%%%%%%%%%%%%%%%%%%%%%%%%%%%%%%%%%%%%%%%%%%%%%%%%%%%%%%%%%%%%%%%%%%
In the previous sections we have developed a general boundary condition and implementation procedure which generalise the well-known linear characteristic one in \cite{kreiss1970,MR436612,MR1339182,MR1669660,nordstrom2005}. In this section we exemplify it's use for common IBVPs in CFD. We begin with first derivative examples including the IEEs, the SWEs and the CEEs and conclude with an analysis of a second derivative case exemplified by the INSEs.
%These three sets of governing equations can all be written in the form
%presented in Proposition \ref{lemma:Matrixrelation} and can therefore be integrated up and produce boundary terms of the form discussed in Lemma \ref{lemma:GenBC}.

%%%%%%%%%%%%%%%%%%%%%%%%%%%%%%%%%%%%%%%%%%%%%%%%%%%%%%%%%%%%%%%%%%%%%%%%
\subsection{The 2D incompressible Euler equations}\label{Eulerex}
%%%%%%%%%%%%%%%%%%%%%%%%%%%%%%%%%%%%%%%%%%%%%%%%%%%%%%%%%%%%%%%%%%%%%%%%
The incompressible 2D Euler equations in split form are
\begin{equation}
P U_t 
+ \frac{1}{2}\left[ (A U)_x + A U_x 
+  (B U)_y + B U_y \right]
= 0.
\label{NS_splitting}
\end{equation} 
where $U=(u,v,p)^T$ and
\begin{equation}
P =
\begin{bmatrix}
1 & 0 & 0 \\
0 & 1 & 0 \\
0 & 0 & 0
\end{bmatrix},
\quad
A=
\begin{bmatrix}
u & 0 & 1 \\
0 & u & 0 \\
1 & 0 & 0
\end{bmatrix},
\quad
B=
\begin{bmatrix}
v & 0 & 0 \\
0 & v & 1 \\
0 & 1 & 0
\end{bmatrix}.
\label{ABI}
\end{equation}
Since the matrices $A,B$ are symmetric, the formulation (\ref{NS_splitting}) is in the required skew-symmetric form (\ref{eq:nonlin_lif})
%\begin{remark}
We obtain an estimate in the semi-norm  $\|U\|^2_P= \int_{\Omega} U^T P  U d \Omega$ involving only the velocities. Note that the pressure $p$ includes a division by the constant density, and hence has the dimension velocity squared.
%\end{remark}

By applying the transformations  \textcolor{blue}{in the beginning of Section  \ref{BC_theory}}, the boundary term gets the form
\begin{equation}\label{1Dprimalstab_trans_final_icomp}
U^T (n_1 A + n_2 B)U = U^T_n \tilde A U_n=W^T  \Lambda W= (W^+)^T   \Lambda^+   W^+ + (W^-)^T   \Lambda^- W^-
%\oint\limits_{\partial\Omega}U^T (n_1 A + n_2 B)   \\\ U \\\ ds  \\\ ds = \oint\limits_{\partial\Omega}W^T   \Lambda   \\\ W\\\ ds = \oint\limits_{\partial\Omega}(W^+)^T   \Lambda^+   \\\ W^+ + (W^-)^T   \Lambda^-   \\\ W^-\\\ ds,
\end{equation}
where the solution vector is rotated into the normal  $(u_n=n_1u+n_2v)$ and tangential ($ u_{\tau}=-n_2u+n_1v)$ direction, and $W = T^{-1}U_n$ such that
\begin{equation}\label{NS_BT_rotated}
U_n=
\begin{bmatrix}
u_n\\
u_{\tau} \\
p
\end{bmatrix},
\tilde A \textcolor{blue}{=}
\begin{bmatrix}
u_n & 0     & 1\\
0      &u_n & 0 \\
1      &0    & 0
\end{bmatrix},
T^{-1}=
\begin{bmatrix}
1 & 0     & 1/u_n\\
0      &1 & 0 \\
0      &0    &1
\end{bmatrix},
W=
\begin{bmatrix}
u_n+p/u_n\\
u_{\tau} \\
p
\end{bmatrix},
\Lambda=
\begin{bmatrix}
u_n & 0     & 0\\
0      &u_n & 0 \\
0      &0    & -1/u_n
\end{bmatrix}.
\end{equation}
%where $W=(u_n+p/u_n, u_{\tau}, p/u_n)^T$,  $\Lambda=diag(u_n, u_n, -u_n)$, $u_n=n_1u+n_2v$ and  $ u_{\tau}=-n_2u+n_1v$. 

 At inflow where $u_n < 0$ 
\begin{equation}
W^-=
\begin{bmatrix}
u_n+p/u_n\\
u_{\tau} 
\end{bmatrix},
\quad
\Lambda^- =
\begin{bmatrix}
u_n & 0 \\
0 & u_n 
\end{bmatrix},
\quad
W^+= p,
\quad
\Lambda^+ = -1/u_n
\label{IEE_inflow}
\end{equation}
while at outflow with $u_n > 0$ we get the reversed situation with
\begin{equation}
W^+=
\begin{bmatrix}
u_n+p/u_n\\
u_{\tau} 
\end{bmatrix},
\quad
\Lambda^+ =
\begin{bmatrix}
u_n & 0 \\
0 & u_n 
\end{bmatrix},
\quad
W^-= p,
\quad
\Lambda^- = -1/u_n.
\label{IEE_outflow}
\end{equation}
By using the definitions in (\ref{IEE_inflow}) and (\ref{IEE_outflow}), it is straightforward to check for boundedness using Lemma \ref{lemma:GenBC}.
%\end{document}
\begin{example}\label{IEE_EX} Recall that we must allow for some nonlinear scaling or interaction since the boundary terms coming from the equations after applying the energy method are typically cubic \textcolor{red}{as discussed in Section \ref{nonlinear_BC_char}}.

\textcolor{blue}{Consider the general boundary condition (\ref{Gen_BC_form}) for the inflow case (\ref{IEE_inflow} above which reads  
\begin{equation}
S^{-1} \left(
\begin{bmatrix}
\sqrt{|u_n|} & 0 \\
0 & \sqrt{|u_n|} 
\end{bmatrix}
\begin{bmatrix}
u_n+p/u_n\\
u_{\tau} 
\end{bmatrix}
-
\begin{bmatrix}
R_1 \\
R_2
\end{bmatrix} 
p/ \sqrt{|u_n|}  \right) = G.
%\begin{bmatrix}
%u_n \\
%u_{\tau}
%\end{bmatrix}
%\Rightarrow
%\begin{bmatrix}
%R_1 \\
%R_2
%\end{bmatrix}=
%\begin{bmatrix}
%1 \\
%0
%\end{bmatrix}
\label{IEE_inflow_ex_0}
\end{equation}
Let us for the sake of argument assume that we want to impose a Dirichlet condition on the tangential velocity, and investigate what other appropriate boundary condition to use.  The choice $R_2=0$ and $S^{-1}_{21}=0$ removes the pressure form the second equation in 
(\ref{IEE_inflow_ex}) and imposes a scaled Dirichlet condition on $u_{\tau}$.  
\begin{equation}
(S^{-1}_{22}/ \sqrt{|u_n|})(|u_n|u_{\tau}) = G_2.
%\begin{bmatrix}
%u_n \\
%u_{\tau}
%\end{bmatrix}
%\Rightarrow
%\begin{bmatrix}
%R_1 \\
%R_2
%\end{bmatrix}=
%\begin{bmatrix}
%1 \\
%0
%\end{bmatrix}
\label{IEE_inflow_ex_tangentiall}
\end{equation}
The remaining first equation (using a diagonal $S$ matrix) becomes 
\begin{equation}
-(S^{-1}_{11}/ \sqrt{|u_n|})(u_n^2 + (1+R_1) p) = G_1.
\label{IEE_inflow_ex_normal}
\end{equation}
The choice $R_1=-1$ removes the influence of the pressure $p$ and leaves a scaled Dirichlet condition on the normal velocity  $u_n$, while $R_1=1$ leaves a scaled total pressure condition.  Both of these conditions in combination with the Dirichlet condition on the tangential velocity are frequently used in practical calculations.} 

\textcolor{blue}{However, the combination of these conditions (i.e.  $(R_1,R_2)^T=(\pm 1, 0)^T$) lead to $I - R^T R=0$ and hence condition  (\ref{R_condition}) is satisfied, but not strictly. This makes the choice of $S$ in (\ref{IEE_inflow_ex}) (and (\ref{S_condition})) irrelevant since only the homogeneous case is possible. This leads to boundedness, but not strong boundedness as defined in Proposition \ref{lemma:Matrixrelation}. By slightly modifying the choice in the first equation such that $R_1 \neq \pm 1$ we can invert $I - R^T R$ as required in (\ref{S_condition}). Making the ansatz S=$diag(\alpha,\beta)$, condition (\ref{S_condition}) becomes
\begin{equation}
\left(
\begin{matrix}
1- \alpha^2/(1-R_1^2)& 0 \\
0 & 1- \beta^2
\end{matrix} \right) \geq 0 
%quad \text{which implies} \quad 
%\alpha^2 \leq  1-R_1^2  \quad \text{and} \quad \beta^2 \leq 1.
\label{IEE_inflow_ex_Scompute}
\end{equation}
which implies that $0 < \alpha^2 \leq  1-R_1^2$ and $0 < \beta^2 \leq 1$ and $R_1^2 < 1$ must hold for strong boundeness. This example exemplifies how one can modify "common often used" boundary conditions to get more stable ones. 
A weak implementation require in addition $\Sigma= \sqrt{|\Lambda^- |} =diag( \sqrt{|u_n|}, \sqrt{|u_n|})$ as specified in (\ref{Sigma_condition}).}

For an outflow condition on the pressure, the boundary condition (\ref{Gen_BC_form}) becomes
\begin{equation}
S^{-1} \left(
p/\sqrt{|u_n|}  - (R_1,R_2)
\begin{bmatrix}
\sqrt{|u_n|} & 0 \\
0 & \sqrt{|u_n|} 
\end{bmatrix}
\begin{bmatrix}
u_n+p/u_n\\
u_{\tau} 
\end{bmatrix}
\right) = G.
%\begin{bmatrix}
%u_n \\
%u_{\tau}
%\end{bmatrix}
%\Rightarrow
%\begin{bmatrix}
%R_1 \\
%R_2
%\end{bmatrix}=
%\begin{bmatrix}
%1 \\
%0
%\end{bmatrix}
\label{IEE_outflow_ex}
\end{equation}
The characteristic boundary condition for the pressure $p$ is recovered with $R=(0,0)$
and hence condition (\ref{R_condition})  holds strictly.  This leads to a strongly energy bounded solution for  $S$ such that $I-S^T S \geq 0$ as required in (\ref{S_condition}) for a vanishing  $R$. A weak  implementation require  $\Sigma= \sqrt{|\Lambda^- |} =1/\sqrt{|u_n|}$  as specified in (\ref{Sigma_condition}).
\end{example} 

\begin{example}\label{IEE_EX} Recall that we must allow for some nonlinear scaling or interaction since the boundary terms coming from the equations after applying the energy method are typically cubic \textcolor{red}{as discussed in Section \ref{nonlinear_BC_char}}.

Dirichlet like inflow conditions on the velocities using the boundary condition (\ref{Gen_BC_form}) are obtained by
\begin{equation}
S^{-1} \left(
\begin{bmatrix}
\sqrt{|u_n|} & 0 \\
0 & \sqrt{|u_n|} 
\end{bmatrix}
\begin{bmatrix}
u_n+p/u_n\\
u_{\tau} 
\end{bmatrix}
-
\begin{bmatrix}
R_1 \\
R_2
\end{bmatrix} 
p/ \sqrt{|u_n|}  \right) = G
%\begin{bmatrix}
%u_n \\
%u_{\tau}
%\end{bmatrix}
%\Rightarrow
%\begin{bmatrix}
%R_1 \\
%R_2
%\end{bmatrix}=
%\begin{bmatrix}
%1 \\
%0
%\end{bmatrix}
\label{IEE_inflow_ex}
\end{equation}
where  \textcolor{blue}{$R=(-1,0)^T$} removes the influence of the pressure $p$. This lead to $I - R^T R=0$ and hence condition  (\ref{R_condition}) is satisfied, but not strictly. This makes the choice of $S$ in (\ref{IEE_inflow_ex}) (and (\ref{S_condition})) irrelevant since only the homogeneous case is possible. This leads to boundedness, but not strong boundedness as defined in Proposition \ref{lemma:Matrixrelation}. A weak 
implementation require $\Sigma= \sqrt{|\Lambda^- |} =diag( \sqrt{|u_n|}, \sqrt{|u_n|})$ as specified in (\ref{Sigma_condition}).

For an outflow condition on the pressure, the boundary condition (\ref{Gen_BC_form}) becomes
\begin{equation}
S^{-1} \left(
p/\sqrt{|u_n|}  - (R_1,R_2)
\begin{bmatrix}
\sqrt{|u_n|} & 0 \\
0 & \sqrt{|u_n|} 
\end{bmatrix}
\begin{bmatrix}
u_n+p/u_n\\
u_{\tau} 
\end{bmatrix}
\right) = G.
%\begin{bmatrix}
%u_n \\
%u_{\tau}
%\end{bmatrix}
%\Rightarrow
%\begin{bmatrix}
%R_1 \\
%R_2
%\end{bmatrix}=
%\begin{bmatrix}
%1 \\
%0
%\end{bmatrix}
\label{IEE_outflow_ex}
\end{equation}
The characteristic boundary condition for the pressure $p$ is recovered with $R=(0,0)$
and hence condition (\ref{R_condition})  holds strictly.  This leads to a strongly energy bounded solution for  $S$ such that $I-S^T S \geq 0$ as required in (\ref{S_condition}) for a vanishing  $R$. A weak  implementation require  $\Sigma= \sqrt{|\Lambda^- |} =1/\sqrt{|u_n|}$  as specified in (\ref{Sigma_condition}).
\end{example}

%%%%%%%%%%%%%%%%%%%%%%%%%%%%%%%%%%%%%%%%%%%%%%%%%%%%%%%%%%%%%%%%%%%%%%%%
\subsection{The 2D shallow water equations}\label{SWEex_2D}
%%%%%%%%%%%%%%%%%%%%%%%%%%%%%%%%%%%%%%%%%%%%%%%%%%%%%%%%%%%%%%%%%%%%%%%%

The 2D SWEs on the skew-symmetric form presented in Proposition \ref{lemma:Matrixrelation} were derived in  \cite{nordstrom2022linear-nonlinear}. They are
\begin{equation}\label{eq:swNoncons_new_skew}
    U_t +  (A U)_x + A^T U_x+ (B U)_y + B^T U_y+CU= 0,
\end{equation}
where $U=(U_1,U_2,U_3)^T=(\phi, \sqrt{\phi} u, \sqrt{\phi} v))^T$, $\phi = g h$ is the geopontential  \cite{oliger1978},  $h$ is the water height, $g$ is the gravitational constant and $(u,v)$ is the fluid velocity in $(x,y)$ direction respectively.  The Coriolis forces are included in the matrix $C$ with the function $f$ which is typically a function of latitude \cite{shallowwaterbook,whitham1974}. Note that $h>0$ and $\phi>0$ from physical considerations.  The matrices in (\ref{eq:swNoncons_new_skew}) constitute a
two-parameter family 
\begin{equation}\label{eq:swNoncons_new_matrix_ansatz_sol_A}
A =        \begin{bmatrix}
     \alpha \frac{U_2}{\sqrt{U_1}}  & (1-3 \alpha) \sqrt{U_1}                  & 0 \\
     2\alpha  \sqrt{U_1}                 & \frac{1}{2}  \frac{U_2}{\sqrt{U_1}} &  0  \\
     0                                            & 0                                                    &\frac{1}{2} \frac{U_2}{\sqrt{U_1}}
       \end{bmatrix},
       B = 
       \begin{bmatrix}
     \beta \frac{U_3}{\sqrt{U_1}}  & 0               &  (1-3 \beta) \sqrt{U_1}    \\
     0              & \frac{1}{2}  \frac{U_3}{\sqrt{U_1}} &  0  \\
     2\beta  \sqrt{U_1}                                         & 0                                                    &\frac{1}{2} \frac{U_3}{\sqrt{U_1}}
       \end{bmatrix}, 
  C = \begin{bmatrix}
     0 & 0  & 0      \\
     0 & 0  &  -f    \\
     0 & +f & 0
       \end{bmatrix}
\end{equation}
where the parameters $\alpha, \beta$ are arbitrary. (Symmetric matrices are e.g. obtained with $\alpha=\beta=1/5$.) 
%\begin{remark}
%\label{Tomas}
%The 1D skew-symmetric version of (\ref{eq:swNoncons_new_matrix_ansatz_sol_A}) was first derived in \cite{tomas}.
%\end{remark}
%\begin{remark}
\label{rem:noalpha_2d}

%The  energy rate cannot depend on the free parameters $\alpha$ and $\beta$ in the matrices  $A$ and $B$ since they are not present in the original SWEs from where (\ref{eq:swNoncons_new_skew}) is derived \cite{nordstrom2022linear-nonlinear}.
By computing the boundary term, we find
\begin{equation}\label{boundarmatrix_2D}
U^T (n_1A+n_2 B)U = 
U^T
       \begin{bmatrix}
      \frac{\alpha+\beta}{2} u_n                      &  \frac{1-\alpha}{2} n_1 \sqrt{U_1} &   \frac{1-\beta}{2} n_2 \sqrt{U_1}     \\
     \frac{1-\alpha}{2} n_1 \sqrt{U_1} & \frac{1}{2} u_n                             &    0                                                   \\
     \frac{1-\beta}{2} n_2 \sqrt{U_1}   & 0                                                  &\frac{1}{2} u_n
       \end{bmatrix}
U=
U^T
       \begin{bmatrix}
      u_n                      & 0                       & 0    \\
                                  & \frac{1}{2} u_n  &    0                                                   \\
       0                         & 0                       &\frac{1}{2} u_n
       \end{bmatrix}
U
\end{equation}
and the (somewhat mysterious) dependency on the free parameters $\alpha$ and $\beta$ vanishes for algebraic reasons. The relation (\ref{boundarmatrix_2D}) seemingly indicate that we need three boundary conditions at inflow ($u_n<0$), and zero at outflow ($u_n>0$). However, this is a nonlinear problem and as shown in  \cite{nordstrom2021linear}, it can be rewritten by changing variables and observing that $u_n=(n_1 U_2+n_2 U_3)/\sqrt{U_1}$. Reformulating  (\ref{boundarmatrix_2D}) in new variables give
\begin{equation}\label{boundarmatrix_2D_SWE_final}
U^T (n_1A+n_2 B)U = 
U^T
       \begin{bmatrix}
      u_n                      & 0                       & 0    \\
                                  & \frac{1}{2} u_n  &    0                                                   \\
       0                         & 0                       &\frac{1}{2} u_n
       \end{bmatrix}
U=
W^T
       \begin{bmatrix}
      -\frac{1}{2 U_n \sqrt{U_1}}                     & 0                       & 0    \\
                                  & \frac{1}{2 U_n \sqrt{U_1}}   &    0                                                   \\
       0                         & 0                       &\frac{1}{2 U_n \sqrt{U_1}} 
       \end{bmatrix}
W,
\end{equation}
where $W^T=(W_1, W_2, W_3)=(U_1^2, U_1^2+U_n^2, U_n U_{\tau})$. The variables $(U_1,U_n,U_{\tau})=(\phi, \sqrt{\phi} u_n,  \sqrt{\phi} u_{\tau})$ are directed in the  normal ($U_n$) and tangential  ($U_{\tau}$) direction respectively. Since we search for a minimal number of boundary conditions, we consider the formulation (\ref{boundarmatrix_2D}) for outflow, where no boundary conditions are required. The relation (\ref{boundarmatrix_2D_SWE_final}) for inflow indicate
that only two boundary conditions are needed at inflow when $U_n<0$. To be specific, at inflow where $U_n < 0$ we find 
\begin{equation}
W^-=
\begin{bmatrix}
U_1^2+U_n^2\\
U_n U_{\tau} 
\end{bmatrix},
\quad
\Lambda^- =
\begin{bmatrix}
 \frac{1}{2 U_n \sqrt{U_1}}   & 0 \\
0 &  \frac{1}{2 U_n \sqrt{U_1}}  
\end{bmatrix},
\quad
W^+= U_1^2,
\quad
\Lambda^+ = -  \frac{1}{2 U_n \sqrt{U_1}}  
\label{SWE_inflow}.
\end{equation}
The definitions in (\ref{SWE_inflow}) can be used to check any inflow conditions for boundedness using Lemma \ref{lemma:GenBC}.

\begin{example}\label{SWE_Example} 
Consider the general form of nonlinear boundary condition in (\ref{Gen_BC_form}).

With Dirichlet like inflow conditions on $U_n, U_{\tau}$ we must remove the influence of  $U_1$. Using (\ref{Gen_BC_form}) we find
\begin{equation}
S^{-1} \left(
\begin{bmatrix}
 \frac{1}{ \sqrt{2 |U_n| \sqrt{U_1}}}   & 0 \\
0 &  \frac{1}{ \sqrt{2 |U_n| \sqrt{U_1}}}  
\end{bmatrix}
\begin{bmatrix}
U_1^2+U_n^2\\
U_n U_{\tau} 
\end{bmatrix}
-
\begin{bmatrix}
R_1 \\
R_2
\end{bmatrix} 
 \frac{1}{ \sqrt{2 |U_n| \sqrt{U_1}}}  
U_1^2 \right) = G
%\begin{bmatrix}
%u_n \\
%u_{\tau}
%\end{bmatrix}
%\Rightarrow
%\begin{bmatrix}
%R_1 \\
%R_2
%\end{bmatrix}=
%\begin{bmatrix}
%1 \\
%0
%\end{bmatrix}
\label{SWE_inflow_ex}.
\end{equation}
In a similar way as for the Dirichlet inflow condition for the IEE we find that this require $R=(1,0)^T $ which in turn lead to $ I- R^T  R=0$. Hence condition  (\ref{R_condition}) is satisfied, but not strictly, which makes the choice of $S$ in (\ref{SWE_inflow_ex})  (and (\ref{S_condition})) irrelevant (similar to the inflow case in Example \ref{IEE_EX}). This leads to boundedness, but not strong boundedness as defined in Proposition \ref{lemma:Matrixrelation}. A weak 
implementation require $\Sigma= \sqrt{|\Lambda^- |} $ in (\ref{SWE_inflow}).
%and specified in (\ref{Sigma_condition}).

By instead specifying the characteristic variable $W^-$ directly (similar to the outflow case in Example \ref{IEE_EX}) we have $R=(0,0)^T$ and (\ref{R_condition}) holds strictly. This lead to a strongly bounded solution for  $S$ such that $I-S^T S \geq 0$ as required in (\ref{S_condition}).
A weak  implementation require $\Sigma=\sqrt{|\Lambda^- |}$ as given in (\ref{SWE_inflow}), see (\ref{Sigma_condition}).
% and specified in (\ref{Sigma_condition}).

%=\tilde S^{-1} diag(\sqrt{|1/(2U_n \sqrt{U_1} |}, \sqrt{|1/(2U_n \sqrt{U_1} |})

\end{example} 

%%%%%%%%%%%%%%%%%%%%%%%%%%%%%%%%%%%%%%%%%%%%%%%%%%%%%%%%%%%%%%%%%%%%%%%%
\subsection{The 2D compressible Euler equations}\label{CEEex_2D}
%%%%%%%%%%%%%%%%%%%%%%%%%%%%%%%%%%%%%%%%%%%%%%%%%%%%%%%%%%%%%%%%%%%%%%%%

The 2D CEEs on the  skew-symmetric form presented in Proposition \ref{lemma:Matrixrelation} were derived in  \cite{Nordstrom2022_Skew_Euler}. They are
%\begin{equation}\label{eq:stab_eq}
%2 P \Phi_t + (\tilde A  \Phi)_x + \tilde A^T  \Phi_x + (\tilde B  \Phi)_y + \tilde B^T  \Phi_y=0 
%(\tilde A  \Phi)_x + \tilde A^T  \Phi_x = 2 P A  \Phi_x,  \quad (\tilde B  \Phi)_y + \tilde B^T  \Phi_y = 2 P B  \Phi_y.
%\end{equation}
%to 
\begin{equation}\label{eq:stab_eq_eul}
P \Phi_t + (A  \Phi)_x + A^T  \Phi_x + (B  \Phi)_y + B^T  \Phi_y=0,
%(\tilde A  \Phi)_x + \tilde A^T  \Phi_x = 2 P A  \Phi_x,  \quad (\tilde B  \Phi)_y + \tilde B^T  \Phi_y = 2 P B  \Phi_y.
\end{equation}
where  $\Phi=(\sqrt{\rho}, \sqrt{\rho} u, \sqrt{\rho} v, \sqrt{p})^T$, $P=diag(1, (\gamma -1)/2, (\gamma -1)/2, 1)$ and
\begin{equation}\label{eq:new_matrix_finalAs}
A = \frac{1}{2}\begin{bmatrix}
      u                                    &  0  &  0 & 0 \\
      0                                    &   \frac{(\gamma -1)}{2} u  &  0 & 0                \\
      0  &   0                                                      &   \frac{(\gamma -1)}{2} u &  0             \\
      0  &  2 (\gamma -1)  \frac{\phi_4}{\phi_1} & 0  &  (2-\gamma)u           
                   \end{bmatrix}, \quad
B = \frac{1}{2}\begin{bmatrix}
      v                                    &  0  &  0 & 0 \\
      0                                    &  \frac{(\gamma -1)}{2} v  &  0 & 0                \\
      0  &   0                                                      &   \frac{(\gamma -1)}{2} v &  0             \\
      0  &  0 & 2 (\gamma -1)  \frac{\phi_4}{\phi_1}   &  (2-\gamma)v          
                   \end{bmatrix}.
 \end{equation}

By rotating the Cartesian velocities to normal and tangential velocities at the boundary, we obtain
\begin{equation}\label{boundarmatrix_contraction_rotated}
\Phi^T (n_1 A + n_2 B) \Phi = 
\Phi^T_r  \begin{bmatrix}
      u_n  &  0 &  0 & 0                       \\
       0                    &  \frac{(\gamma -1)}{2}   u_n         &  0                                          &      (\gamma -1)  \frac{\phi_4}{\phi_1}                                                 \\
       0                    &   0                       &  \frac{(\gamma -1)}{2}   u_n                                                                 &  0                                                      \\
       0                    &  (\gamma -1)  \frac{\phi_4}{\phi_1}& 0 & (2-\gamma)u_n            
                   \end{bmatrix} \Phi_r,
\end{equation}
where  $\Phi_r=(\phi_1, \phi_2, \phi_3,  \phi_4)^T=(\sqrt{\rho}, \sqrt{\rho} u_n, \sqrt{\rho} u_{\tau}, \sqrt{p})^T$. 

The boundary term (\ref{boundarmatrix_contraction_rotated}) can be transformed to diagonal form with the boundary term $W^T \Lambda W$ where
\begin{equation}
W=
\begin{bmatrix}
\phi_1\\
\phi_2+2 \phi_4^2/\phi_2 \\
\phi_3 \\
\phi_4 
\end{bmatrix}
\quad 
\Lambda=
\begin{bmatrix}
      u_n  &  0 &  0 & 0                       \\
       0                    &  \frac{(\gamma -1)}{2}   u_n         &  0                                          &   0                                                \\
       0                    &   0                       &  \frac{(\gamma -1)}{2}   u_n                                                                 &  0                                                      \\
       0                    &  0& 0 & (2-\gamma)u_n     \Psi(M_n)        
                   \end{bmatrix}.
\label{CEE_def}
\end{equation}
%By comparing with (\ref{boundarmatrix_contraction_rotated}) we see that 
The last diagonal entry contain the factor $\Psi(M_n)$ which is a function of the normal Mach number $M_n = u_n/c$. Explicitly we have
\begin{equation}\label{psi_def}
\Psi(M_n)= 1-\frac{2(\gamma -1)}{\gamma (2-\gamma)} \frac{1}{M_n^2}, 
\end{equation}
which switches sign at  $M_n^2=2(\gamma -1)/(\gamma (2-\gamma))$. 
%\begin{remark}
As shown in  \cite{Nordstrom2022_Skew_Euler}, this yields  $|M_n|=1$ for $\gamma=\sqrt{2}$, while for  $\gamma=1.4$ we get $|M_n| \approx 0.95$.
%\end{remark}

Due to the sign shift in $\Psi$ at $M_n^2=\gamma (2-\gamma)/(2(\gamma -1))$ we get different cases for inflow when $u_n < 0$.  We find that for inflow when $u_n <0, \Psi < 0$, the relation (\ref{CEE_def}) leads to
\begin{equation}
W^-=
\begin{bmatrix}
\phi_1\\
\phi_2+2 \phi_4^2/\phi_2 \\
\phi_3
\end{bmatrix},
\quad 
\Lambda^-=
\begin{bmatrix}
      u_n  &  0 &  0                 \\
       0                    &  \frac{(\gamma -1)}{2}   u_n         &  0                                                \\
       0                    &   0                       &  \frac{(\gamma -1)}{2}   u_n                                                                                     \\
\end{bmatrix},
\quad 
W^+= \phi_4,
\quad
\Lambda^+ = (2-\gamma)u_n     \Psi(M_n).
\label{CEE_inflow}
\end{equation}
For inflow $u_n <0, \Psi >0$ we get $W^-=W$ and $\Lambda^- = \Lambda$ from relation (\ref{CEE_def}), i.e. all eigenvalues are negative. 

In the outflow case, the shift in speed can be ignored since an alternate form of (\ref{boundarmatrix_contraction_rotated}) different from (\ref{CEE_def}) exist. By contracting (\ref{boundarmatrix_contraction_rotated}) we find that 
\begin{equation}\label{contracted_CEE_BT}
\Phi^T (n_x  A + n_y B) \Phi = u_n(\phi_1^2+\frac{(\gamma -1)}{2} (\phi_2^2+\phi_3^2)+\gamma  \phi_4^2)=
\Phi^T_r  \begin{bmatrix}
      u_n  &  0 &  0 & 0                       \\
       0                    &  \frac{(\gamma -1)}{2}   u_n         &  0                                          &    0                                               \\
       0                    &   0                       &  \frac{(\gamma -1)}{2}   u_n                                                                 &  0                                                     \\
       0                    & 0 & 0 & \gamma u_n            
                   \end{bmatrix} \Phi_r,
\end{equation}
which proves that no boundary conditions are necessary in the outflow case. 
%\begin{remark}
%The situation for the SWEs and CEEs are similar in the sense that at least two {\it different} formulations of the boundary terms can be found. In both cases, the minimal number of required conditions differ for the both the inflow and outflow cases. It is striking that no outflow conditions seem to be necessary.
%\end{remark}
\begin{example} Consider the general form of boundary condition in (\ref{Gen_BC_form}).

With Dirichlet inflow conditions on $\phi_1,\phi_2,\phi_3$ for $\Psi(M_n) < 0$ we find using (\ref{CEE_inflow})
\begin{equation}
S^{-1} \left(
\begin{bmatrix}
      \sqrt{|u_n|}  &  0 &  0                 \\
       0                    &  \sqrt{\frac{(\gamma -1)}{2} |u_n|}         &  0                                                \\
       0                    &   0                       &    \sqrt{\frac{(\gamma -1)}{2} |u_n|}                                                 \\
\end{bmatrix}
\begin{bmatrix}
\phi_1\\
\phi_2+2 \phi_4^2/\phi_2 \\
\phi_3
\end{bmatrix}
-
 \sqrt{(2-\gamma)u_n     \Psi(M_n)}
\textcolor{blue}{\begin{bmatrix}
R_1  \\
R_2  \\
R_3
\end{bmatrix}} \phi_4\right) = G
\label{IEE_outflow_ex}
\end{equation}
which lead to \textcolor{blue}{$(R_1,R_2,R_3)^T=(0,R_2,0)^T$} where \textcolor{blue}{$R_2=\sqrt{2(\gamma -1)/((2-\gamma) |\Psi|)}(\phi_4/\phi_2)$}. Hence condition  (\ref{R_condition}) is violated since  $ I- R^T  R= 1- R_2^2=1+ (\Psi-1)/ |\Psi|)=-1/ |\Psi|)<0$ and no bound can be found. 

By specifying the characteristic variable  $W^-$ directly (as in the outflow case in Example \ref{IEE_EX} and the inflow case in Example \ref{SWE_Example}) we have $R=(0,0,0)^T$ and (\ref{R_condition}) holds strictly. This lead to a strongly bounded solution if  $S$ is such that $I-S^T S \geq 0$ as required in (\ref{S_condition}).
 A weak  implementation require $\Sigma=\sqrt{|\Lambda^- |}$ as given in (\ref{CEE_inflow}) and (\ref{Sigma_condition}).

%=\tilde S^{-1} diag(\sqrt{|1/(2U_n \sqrt{U_1} |}, \sqrt{|1/(2U_n \sqrt{U_1} |})

\end{example} 

%%%%%%%%%%%%%%%%%%%%%%%%%%%%%%%%%%%%%%%%%%%%%%%%%%%%%%%%%%%%%%%%%%%%%%%%
\subsection{The 2D incompressible Navier-Stokes equations}\label{NSex}
%%%%%%%%%%%%%%%%%%%%%%%%%%%%%%%%%%%%%%%%%%%%%%%%%%%%%%%%%%%%%%%%%%%%%%%%
Here we extend the analysis of the IEEs (\ref{NS_splitting}) to the INSEs by adding on the viscous fluxes to obtain 
%the incompressible 2D Navier-Stokes equations
\begin{equation}
P U_t 
+ \frac{1}{2}\left[ (A U)_x + A U_x 
+  (B U)_y + B U_y \right]
=\epsilon(( P U_x)_x+(P U_y)_y)
\label{INS_splitting}
\end{equation} 
where $U=(u,v,p)^T$,  $\epsilon$ is the non-dimensional viscosity and the matrices $P,A,B$ are given in (\ref{ABI}). The righthand side of (\ref{INS_splitting}) identifies the viscous fluxes. %to be $\epsilon P U_x=\epsilon ((u_x,v_x,0)^T, (u_y,v_y,0)^T$ and $\epsilon P U_y=\epsilon (u_y,v_y,0)^T$ 
The energy method applied to (\ref{INS_splitting}) yields
\begin{equation}\label{NS_BT}
\frac{1}{2} \frac{d}{dt}\|U\|^2_P + \epsilon (\|U_x\|^2_P+\|U_y\|^2_P)  +\frac{1}{2} \oint\limits_{\partial\Omega}U^T  (n_1 A+n_2 B) U - 2\epsilon U^T(n_1 PU_x+n_2PU_y) \\\ ds= 0.
\end{equation}
By using the notations of (\ref{newflux_form_1}) in Section (\ref{nonlinear_BC_visc}), the argument of the surface integral  in (\ref{NS_BT}) becomes
\begin{equation}\label{NS_BT_rotated}
U^T_n \tilde A U_n-  \epsilon (U^T_n  \tilde  F +  \tilde  F^T  U_n) \quad \text{where}  \quad  \tilde  F = ( \tilde  F_n,  \tilde  F_{\tau}, 0)^T
\end{equation}
include the normal $(\tilde  F_n)$ and tangential $(\tilde  F_{\tau})$ shear stresses.
%In (\ref{NS_BT_rotated}) we have used the notation in Section \ref{Eulerex}. 
%For convenience we added the scaling with a factor of two of the viscous shear stresses in the normal ($ \tilde  F_n$) and tangential  ($\tilde  F_{\tau}$) direction as was done in Section \ref{nonlinear_BC_visc}.
%V = (un, ut , p, τn, τt )⊤. We have introduced the notations an = (a, b) · ⃗n, un = (u, v) · ⃗n, ut = (u, v) · ⃗t, where
%n = (nx , ny )⊤ is the outward-pointing unit normal to ∂Ω and ⃗t = (−ny , nx )⊤ is the unit tangential vector. Also,
%τn = ϵ(nx∂⃗nu + ny∂⃗nv) = ϵ ⃗n · ∂⃗n(u, v), τt = ϵ(−ny∂⃗nu + nx∂⃗nv) = ϵ⃗t · ∂⃗n(u, v) are the unit normal and tangential
%stresses, respectively. Furthermore, ∂⃗n = ⃗n · ∇ denotes the normal directional derivative.

%Following the general formulation presented in Section  \ref{nonlinear_BC_visc}
We can now formulate (\ref{NS_BT_rotated}) in the matrix-vector form presented in Section \ref{nonlinear_BC_visc}, which leads to
\begin{equation}\label{matrix-vector_repeated}
\tilde W^T \tilde  \Lambda \tilde W
=
 \begin{bmatrix}
W  - \epsilon  \Lambda^{-1}  T^T  \tilde F\\
              \,\ \quad - \epsilon   \Lambda^{-1} T^T   \tilde F
\end{bmatrix}^T
\begin{bmatrix}
\Lambda & 0 \\
          0 & -   \Lambda
\end{bmatrix}
\begin{bmatrix}
W - \epsilon  \Lambda^{-1}  T^T \tilde F \\
             \,\ \quad  - \epsilon \Lambda^{-1} T^T \tilde F
\end{bmatrix}.
\end{equation}
To determine the minimal number of boundary conditions needed for a bound, we consider the terms related to the lower right block $ (\Lambda^{-1} T^T \tilde F)^T \Lambda (\Lambda^{-1} T^T \tilde F)$ in detail.  That lower block \textcolor{green}{(ignoring the factor $\epsilon$)}  is
\begin{equation}\label{matrix-vector_lower}
\textcolor{blue}{-} (T^T \tilde F)^T \Lambda^{-1} (T^T \tilde F)=
%(\Lambda^{-1} T^T \tilde F)^T \Lambda (\Lambda^{-1} T^T \tilde F)=(T^T \tilde F)^T \Lambda^{-1} (T^T \tilde F)=
 \begin{bmatrix}
\tilde F_n\\
\tilde F_{\tau} \\
- \tilde F_n/u_n
\end{bmatrix}^T
\begin{bmatrix}
-1/u_n& 0        & 0     \\
 0       & -1/u_n & 0    \\
 0       &0        & u_n
\end{bmatrix}
\begin{bmatrix}
\tilde F_n\\
\tilde F_{\tau} \\
- \tilde F_n/u_n
\end{bmatrix}=-
\tilde F_{\tau}^2/u_n
\end{equation}
which reduces the $6 \times 6$  diagonal matrix in relation (\ref{matrix-vector_repeated}) to a $4 \times 4$ diagonal matrix. We find that
\begin{equation}\label{matrix-vector_repeated_reduced}
\tilde W^T \tilde  \Lambda \tilde W
=
 \begin{bmatrix}
W  - \epsilon  \Lambda^{-1}  T^T  \tilde F\\
              \,\ \quad - \epsilon  \tilde F_{\tau}
\end{bmatrix}^T
\begin{bmatrix}
 \Lambda & 0 \\
0 & -1/u_n 
\end{bmatrix}
\begin{bmatrix}
W - \epsilon  \Lambda^{-1}  T^T \tilde F \\
             \,\ \ - \epsilon \tilde F_{\tau}
\end{bmatrix}.
\end{equation}
In full detail (with a slightly different scaling from the one used in Section \ref{Eulerex}), the result (\ref{matrix-vector_repeated_reduced}) becomes
\begin{equation}\label{matrix-vector_repeated_reduced_full}
\tilde W^T \tilde  \Lambda \tilde W
=
 \begin{bmatrix}
u_n^2 + p- \epsilon  \tilde F_n \\
 u_n u_{\tau}   \   \  \ - \epsilon  \tilde F_{\tau} \\
  \   \  \   \   \  \  \  p  \ - \epsilon  \tilde F_n \\
   \   \  \   \   \  \   \   \  \   \  - \ \epsilon  \tilde F_{\tau}
\end{bmatrix}^T
\begin{bmatrix}
1/u_n & 0 & 0 & 0 \\
0     & 1/u_n & 0 & 0  \\
0      &0     & -1/u_n & 0  \\
0      &0     & 0 & -1/u_n 
\end{bmatrix}
 \begin{bmatrix}
u_n^2 + p- \epsilon  \tilde F_n \\
 u_n u_{\tau}   \   \  \ - \epsilon  \tilde F_{\tau} \\
  \   \  \   \   \  \  \  p  \ - \epsilon  \tilde F_n \\
   \   \  \   \   \  \   \   \  \   \  - \ \epsilon  \tilde F_{\tau}
\end{bmatrix}
\end{equation}
which is exactly what was obtained in \cite{nordstrom2019,nordstrom2020spatial} using a slightly different procedure. 

An inspection of (\ref{matrix-vector_repeated_reduced_full}) reveals that 2 boundary conditions are required for a bound, both at inflow and outflow. 
% again the same result as in  \cite{nordstrom2019,nordstrom2020spatial}.
At inflow where $u_n < 0$ 
\begin{equation}\label{INE_EX_inflow}
W^-=
\begin{bmatrix}
u_n^2 + p- \epsilon  \tilde F_n\\
u_n u_{\tau}   \   \  - \epsilon  \tilde F_{\tau}
\end{bmatrix},
\quad
\Lambda^- =
\begin{bmatrix}
1/u_n & 0 \\
0 & 1/u_n 
\end{bmatrix},
\quad
W^+=
\begin{bmatrix}
p   - \epsilon  \tilde F_n  \\
    \  \   \     - \epsilon  \tilde F_{\tau}
\end{bmatrix},
\quad
\Lambda^+ =
\begin{bmatrix}
-1/u_n & 0 \\
0 & -1/u_n 
\end{bmatrix},
\end{equation}
while at outflow with $u_n > 0$ we get the reversed situation with
\begin{equation}\label{INE_EX_outflow}
W^+=
\begin{bmatrix}
u_n^2 + p- \epsilon  \tilde F_n\\
u_n u_{\tau}   \   \  - \epsilon  \tilde F_{\tau}
\end{bmatrix},
\quad
\Lambda^+ =
\begin{bmatrix}
1/u_n & 0 \\
0 & 1/u_n 
\end{bmatrix},
\quad
W^-=
\begin{bmatrix}
p   - \epsilon  \tilde F_n  \\
    \  \   \     - \epsilon  \tilde F_{\tau}
\end{bmatrix},
\quad
\Lambda^- =
\begin{bmatrix}
-1/u_n & 0 \\
0 & -1/u_n 
\end{bmatrix}.
\end{equation}
By using the definitions (\ref{INE_EX_inflow}) and (\ref{INE_EX_outflow}), it is straightforward to check for boundedness using Lemma \ref{lemma:GenBC}.

%\begin{remark}
%If $\tilde I$ has full rank, we can directly apply the boundary condition (\ref{Gen_BC_form}) and  Lemma \ref{lemma:GenBC} to obtain energy bounds. 
%However, if $\tilde I$ is rank deficient, then the minimal number of boundary conditions in (\ref{matrix-vector_False} {\it is not} given by the number of diagonal entries in the matrix with the wrong sign. That number is typically reduced by linear dependencies. We will discuss this phenomenon in more detail for the INEs below. 
%\end{remark}

\begin{example}\label{IEE_EX} Consider the general form of nonlinear boundary condition in (\ref{Gen_BC_form}).
%Recall that we must allow for some nonlinear scaling or interaction since the boundary terms coming from the equations after applying the energy method are typically cubic, not quadratic.

Dirichlet like inflow conditions on the velocities using the boundary condition (\ref{Gen_BC_form}) and  (\ref{INE_EX_inflow}) become
\begin{equation}
S^{-1} \left(
\begin{bmatrix}
\sqrt{|u_n|} & 0 \\
0 &/\sqrt{|u_n|} 
\end{bmatrix}^{-1} 
\begin{bmatrix}
u_n^2 + p- \epsilon  \tilde F_n\\
u_n u_{\tau}   \   \  - \epsilon  \tilde F_{\tau}
\end{bmatrix}
-
\begin{bmatrix}
R_{11} & R_{12}  \\
R_{21} & R_{22}
\end{bmatrix}
\begin{bmatrix}
\sqrt{|u_n|} & 0 \\
0 &\sqrt{|u_n|} 
\end{bmatrix}^{-1} 
\begin{bmatrix}
p   - \epsilon  \tilde F_n  \\
    \  \       - \epsilon  \tilde F_{\tau}
\end{bmatrix}
  \right) = G.
%\begin{bmatrix}
%u_n \\
%u_{\tau}
%\end{bmatrix}
%\Rightarrow
%\begin{bmatrix}
%R_1 \\
%R_2
%\end{bmatrix}=
%\begin{bmatrix}
%1 \\
%0
%\end{bmatrix}
\label{INE_inflow_ex}
\end{equation}
The matrix $R$ with elements $R_{11}=R_{22}=1$ and $R_{12}=R_{21}=0$ removes the influence of the pressure $p$ and shear stresses. This lead to $I - R^T R=0$ and hence condition  (\ref{R_condition}) is satisfied, but not strictly. This makes the choice of $S$ in (\ref{IEE_inflow_ex}) (and (\ref{S_condition})) irrelevant since only the homogeneous case is possible. This leads to boundedness, but not strong boundedness as defined in Proposition \ref{lemma:Matrixrelation}. A weak 
implementation require $\Sigma= \sqrt{|\Lambda^- |} =diag( 1/\sqrt{|u_n|}, 1/\sqrt{|u_n|})$ as specified in (\ref{Sigma_condition}).

The outflow conditions on the characteristic variables containing the pressure and shear stresses, the boundary condition (\ref{Gen_BC_form}) and  (\ref{INE_EX_outflow}) leads to the reversed formulation
\begin{equation}
S^{-1} \left(
\begin{bmatrix}
\sqrt{|u_n|} & 0 \\
0 &/\sqrt{|u_n|} 
\end{bmatrix}^{-1} 
\begin{bmatrix}
p   - \epsilon  \tilde F_n  \\
    \  \        - \epsilon  \tilde F_{\tau}
\end{bmatrix}
-
\begin{bmatrix}
R_{11} & R_{12}  \\
R_{21} & R_{22}
\end{bmatrix}
\begin{bmatrix}
\sqrt{|u_n|} & 0 \\
0 &\sqrt{|u_n|} 
\end{bmatrix}^{-1} 
\begin{bmatrix}
u_n^2 + p- \epsilon  \tilde F_n\\
u_n u_{\tau}   \   \  - \epsilon  \tilde F_{\tau}
\end{bmatrix}
  \right) = G.
%\begin{bmatrix}
%u_n \\
%u_{\tau}
%\end{bmatrix}
%\Rightarrow
%\begin{bmatrix}
%R_1 \\
%R_2
%\end{bmatrix}=
%\begin{bmatrix}
%1 \\
%0
%\end{bmatrix}
\label{INE_outflow_ex}
\end{equation}
Here we can choose the matrix $R$ to be the null matrix and and hence condition (\ref{R_condition})  holds strictly.  This leads to a strongly energy bounded solution for  $S$ such that $I-S^T S \geq 0$ as required in (\ref{S_condition}) for a vanishing  $R$. A weak  implementation require  $\Sigma= \sqrt{|\Lambda^- |} $  as specified in (\ref{Sigma_condition}).
\end{example} 

%%%%%%%%%%%%%%%%%%%%%%%%%%%%%%%%%%%%%%%%%%%%%%%%%%%%%%%%%%%%%%%%%%%%%%%%
\section{Some open questions for nonlinear boundary conditions}\label{sec:open_q}
%%%%%%%%%%%%%%%%%%%%%%%%%%%%%%%%%%%%%%%%%%%%%%%%%%%%%%%%%%%%%%%%%%%%%%%%
We end this paper by discussing some open questions arising from the nonlinear system analysis above.
% and limit ourselves to the first derivative cases.
%, and relate them to the linear well-posedness theory for IBVPs.

%%%%%%%%%%%%%%%%%%%%%%%%%%%%%%%%%%%%%%%%%%%%%%%%%%%%%%%%%%%%%%%%%
\subsection{The number of boundary conditions in nonlinear IBVPs required for boundedness}\label{sec:open_q_number}
%%%%%%%%%%%%%%%%%%%%%%%%%%%%%%%%%%%%%%%%%%%%%%%%%%%%%%%%%%%%%%%%%
The boundary conditions for the SWEs and CEEs are similar in the sense that at least two {\it different} formulations of the boundary terms can be found. The minimal number of required conditions differ both in the inflow and outflow cases. One common feature is that no outflow conditions seem to be necessary. Another similar feature is that the number of outflow conditions is independent of the speed of sound for the CEEs and the celerity in the SWE case.  Both these effects differ from what one finds in a linear analysis. 
\begin{remark}
By substituting the IEE variables $W=(u_n +p/u_n, u_{\tau},p/u_n)^T$ in (\ref{1Dprimalstab_trans_final_icomp}) with $W=(u_n, u_{\tau}, \sqrt{p})^T$ (similar to the ones used in the CEE and SWE cases) one obtains a similar situation also for the IEEs. The eigenvalues for the IEEs transform from $\Lambda=diag(u_n,u_n,-u_n)$ to $\Lambda=diag(u_n,u_n, 2u_n)$ which leads to different number of boundary conditions.
\end{remark}

%%%%%%%%%%%%%%%%%%%%%%%%%%%%%%%%%%%%%%%%%%%%%%%%%%%%%%%%%%%%%%%%%
\subsection{The effect of nonlinear boundary conditions on uniqueness and existence}\label{sec:open_q_uniqueness}
%%%%%%%%%%%%%%%%%%%%%%%%%%%%%%%%%%%%%%%%%%%%%%%%%%%%%%%%%%%%%%%%%
%Energy boundedness for a minimal number of boundary conditions is a necessary condition, but may not be sufficient sufficient for uniqueness and/or existence. 
As stated in Remark \ref{explain_min_nr},  a minimal number of dissipative boundary conditions in the linear case  leads to uniqueness since it determines the normal modes of the solution \cite{kreiss1970,Strikwerda1977797}. The (same) minimal number of boundary conditions can also be obtained using the energy method and specifying the number of boundary conditions required for a bound, see \cite{nordstrom2020}.  If uniqueness and boundedness for a minimal number of boundary conditions are given, existence can be shown (e.g. using Laplace transforms or difference approximations \cite{gustafsson1995time,kreiss1989initial}). For linear IBVPs,  the number of boundary conditions is independent of the solution and only depend on known external data. For nonlinear IBVPs, that is no longer the case, and the number may change as the solution develops in time. In addition, as we have seen above, it also varies depending on the particular formulation chosen. This is confusing and raises a number of questions that we will {\it speculate} on below.

%Energy boundedness for a minimal number of boundary conditions is a necessary condition, but may not be sufficient sufficient for uniqueness and/or existence. In the linear case, a minimal number of dissipative boundary conditions leads to well-posedness by the fact that energy boundedness implies uniqueness and a minimal number of appropriate boundary conditions determine the modes of the solution \cite{Kreiss1970277,Strikwerda1977797} and lead to existence.  For details on how the energy boundedness relate to the normal mode analysis for linear IBVPs, see \cite{nordstrom2020}. A central property in the linear IBVP theory is that the number of boundary conditions is constant, independent of the specific choice of dissipative boundary operator one chooses (which may vary). In the nonlinear case, the number of boundary conditions is no longer constant, it depends on the specific formulation one chooses as discussed above, which raises a number of questions that will be discussed and (mostly) speculated on below.

Let us consider the SWEs as an example. The two forms of the boundary terms given in (\ref{boundarmatrix_2D_SWE_final}) were

\begin{equation}\label{boundarmatrix_examples}
%U^T (n_xA+n_y B)U = 
U^T
       \begin{bmatrix}
      u_n                      & 0                       & 0    \\
                                  & \frac{1}{2} u_n  &    0                                                   \\
       0                         & 0                       &\frac{1}{2} u_n
       \end{bmatrix}
U=
W^T
       \begin{bmatrix}
      -\frac{1}{2 U_n \sqrt{U_1}}                     & 0                       & 0    \\
                                  & \frac{1}{2 U_n \sqrt{U_1}}   &    0                                                   \\
       0                         & 0                       &\frac{1}{2 U_n \sqrt{U_1}} 
       \end{bmatrix}
W,
\end{equation}
where $W^T=(W_1, W_2, W_3)=(U_1^2, U_1^2+U_n^2, U_n U_{\tau})$ and $(U_1,U_n,U_{\tau})=(\phi, \sqrt{\phi} u_n,  \sqrt{\phi} u_{\tau})$. 
Based on the two formulations in (\ref{boundarmatrix_examples}), one may base the boundary procedure on one of the following four scenarios.
%The procedure to arrive at  a general nonlinear inhomogeneous boundary condition implementation consist of the following steps for the unknowns in $R,S,\Sigma$ in (\ref{Gen_BC_form}).
\begin{enumerate}

\item The left formulation with variable $U$ at both inflow and outflow boundaries.

\item The right formulation with variable $W$ at both inflow and outflow boundaries.

\item The left formulation with variable $U$ at inflow and the right formulation with $W$ at outflow boundaries.

\item The right formulation with variable $W$ at inflow and the left formulation with $U$ at outflow boundaries.

\end{enumerate}
Scenario 1 would in a one-dimensional setting lead to three boundary conditions all applied on the inflow boundary. Scenario 2 would also give three boundary conditions, but now two would be applied on the inflow boundary and one on the outflow boundary. Scenario 3 would lead to four boundary conditions, three on the inflow and one on the outflow boundary. Scenario 4 would only give two boundary conditions, both applied on the inflow boundary. 

If the above scenarios were interpreted in the linear sense, both Scenario 1 and 2 would determine the solution uniquely. (One of them would be a better choice than the other depending on the growth or decay of the solution away from the boundary \cite{kreiss1970,Strikwerda1977797}.) In scenario 3, the solution would be overspecified, leading to loss of existence. In scenario 4, the solution would be underspecified, leading to loss of uniqueness. In summary: Scenario 1 and 2 may lead to acceptable solutions, Scenario 3 give no solution at all, while scenario 4 yield a bounded solution with limited (or no) accuracy.

However, since these results are nonlinear, the above summary is merely {\it speculative}. We do not know exactly how to interpret them, since the present nonlinear theory is incomplete. We only know that boundedness is required. It also seems likely though that scenario 1 and 2 should be preferred over scenario 3 and 4. The speculations in this section are of course equally valid (or not valid) for the CEEs and IEEs.
\begin{remark}
Whether a reformulation that leads to different minimal numbers of boundary conditions can be done also in the second derivative cases, e.g. in the INSEs,  is not clear. We think not, since the added viscous terms are  linear and do not contribute to the diagonal entries.
\end{remark}

%%%%%%%%%%%%%%%%%%%%%%%%%%%%%%%%%%%%%%%%%%%%%%%%%%%%%%%%%%%%%%%%%%%%%%%%
\section{Summary}\label{sec:conclusion}
%%%%%%%%%%%%%%%%%%%%%%%%%%%%%%%%%%%%%%%%%%%%%%%%%%%%%%%%%%%%%%%%%%%%%%%%
In this paper we have complemented the stability analysis of nonlinear initial boundary value problems on skew-symmetric first derivative form initiated in  \cite{nordstrom2022linear-nonlinear,Nordstrom2022_Skew_Euler}, by adding the analysis of nonlinear boundary conditions.
 \textcolor{green}{The new nonlinear boundary procedure for non-zero data generalise the well known characteristic boundary procedure for linear problems to the nonlinear setting.} 
%In  \cite{nordstrom2022linear-nonlinear, Nordstrom2022_Skew_Euler} we focused on the skew-symmetric property of the nonlinear terms (in most cases hyperbolic) related to the first derivatives {\it assuming}  that boundary conditions leading to an energy bound were available. 
We also extend the analysis to parabolic and incompletely parabolic problems and derive explicit boundary conditions for these cases. The general boundary procedure for nonlinear initial boundary value problems is exemplified using: the shallow water equations, the incompressible Euler equations and Navier-Stokes equations and the compressible Euler equations. Once the bound of the continuous problem is obtained, we show how to implement the boundary conditions in a provable stable way using summation-by-parts formulations and weak boundary procedures. 
%Based on the specific skew-symmetric form of nonlinear problem that leads to energy and entropy bounds derived in for the compressible Euler equations. The skew-symmetric formulation automatically produced energy and entropy stable numerical schemes for the compressible Euler equations if these are formulated on summation-by-parts form. 

%This paper together with  \cite{nordstrom2022linear-nonlinear} have shown that the incompressible Euler equations, the shallow water equations and the compressible Euler equations can all be transformed to skew-symmetric form. Once in that form stable, easy to apply nonlinear schemes follows if summation--by parts operators are used for the discretisation. No additional requirements, such as chain rules or sign requirements are needed.

%%%%%%%%%%%%%%%%%%%%%%%%%%%%%%%%%%%%%%%%%%%%%%%%%%%%%%%%%%%%%%%%%%%%%%%%
\section*{Acknowledgment}
%%%%%%%%%%%%%%%%%%%%%%%%%%%%%%%%%%%%%%%%%%%%%%%%%%%%%%%%%%%%%%%%%%%%%%%%
Jan Nordstr\"{o}m was supported by Vetenskapsr{\aa}det, Sweden [award no.~2021-05484 VR] and University of Johannesburg.

%%%%%%%%%%%%%%%%%%%%%%%%%%%%%%%%%%%%%%%%%%%%%%%%%%%%%%%%%%%%%%%%%%%%%%%%
%\section*{References}
%\section{References}
%%%%%%%%%%%%%%%%%%%%%%%%%%%%%%%%%%%%%%%%%%%%%%%%%%%%%%%%%%%%%%%%%%%%%%%%
\bibliographystyle{elsarticle-num}
\bibliography{References_Jan,References_andrew,References_Fredrik}

\begin{thebibliography}{10}
\expandafter\ifx\csname url\endcsname\relax
  \def\url#1{\texttt{#1}}\fi
\expandafter\ifx\csname urlprefix\endcsname\relax\def\urlprefix{URL }\fi
\expandafter\ifx\csname href\endcsname\relax
  \def\href#1#2{#2} \def\path#1{#1}\fi

\bibitem{nordstrom2022linear-nonlinear}
J.~Nordström, Nonlinear and linearised primal and dual initial boundary value
  problems: When are they bounded? how are they connected?, Journal of
  Computational Physics 455 111001 (2022).

\bibitem{Nordstrom2022_Skew_Euler}
J.~Nordström, A skew-symmetric energy and entropy stable formulation of the
  compressible {E}uler equations, Journal of Computational Physics 470 111573
  (2022).

\bibitem{kreiss1970}
H.-O. Kreiss, Initial boundary value problems for hyperbolic systems, Commun.
  Pur. Appl. Math. 23~(3) (1970) 277--298.

\bibitem{kreiss1989initial}
H.-O. Kreiss, J.~Lorenz, Initial-boundary value problems and the
  {N}avier-{S}tokes equations, Vol.~47, SIAM, 1989.

\bibitem{Gustafsson1978}
B.~Gustafsson, A.~Sundstrom, Incompletely parabolic problems in fluid dynamics,
  SIAM J. Appl. Math. 35~(2) (1978) 343--357.

\bibitem{gustafsson1995time}
B.~Gustafsson, H.-O. Kreiss, J.~Oliger, Time dependent problems and difference
  methods, Vol.~24, JWS, 1995.

\bibitem{oliger1978}
J.~Oliger, A.~Sundstr{\"o}m, Theoretical and practical aspects of some initial
  boundary value problems in fluid dynamics, SIAM J. Appl. Math. 35~(3) (1978)
  419--446.

\bibitem{nordstrom2005}
J.~Nordstr\"{o}m, M.~Sv\"{a}rd, Well posed boundary conditions for the
  {N}avier--{S}tokes equations, SIAM J. Numer. Anal. 43 (2005) 1231--1255.

\bibitem{nordstrom2019}
J.~Nordstr\"{o}m, C.~L. Cognata, Energy stable boundary conditions for the
  nonlinear incompressible {N}avier--{S}tokes equations, Math. Comput. 88~(316)
  (2019) 665--690.

\bibitem{tadmor1984}
E.~Tadmor, Skew-selfadjoint form for systems of conservation laws, J. Math.
  Anal. Appl. 103~(2) (1984) 428--442.

\bibitem{Tadmor1987}
E.~Tadmor, The numerical viscosity of entropy stable schemes for systems of
  conservation laws, Math. Comput. 49~(179) (1987) 91--103.

\bibitem{Tadmor2003}
E.~Tadmor, Entropy stability theory for difference approximations of nonlinear
  conservation laws and related time-dependent problems, Acta Numer. 12 (2003)
  451--512.

\bibitem{godunov1961interesting}
S.~K. Godunov, An interesting class of quasilinear systems, in: Dokl. Acad.
  Nauk SSSR, Vol.~11, 1961, pp. 521--523.

\bibitem{volpert1967}
A.~I. Vol'pert, The space {BV} and quasilinear equations, Math. USSR SB+ 10
  (1967) 257--267.

\bibitem{kruzkov1970}
S.~N. Kru\v{z}kov, First order quasilinear equations in several independent
  variables, Math. USSR SB+ 10~(2) (1970) 127--243.

\bibitem{dafermos1973entropy}
C.~M. Dafermos, The entropy rate admissibility criterion for solutions of
  hyperbolic conservation laws, J. Differ. Equations 14~(2) (1973) 202--212.

\bibitem{lax1973}
P.~D. Lax, Hyperbolic systems of conservation laws and the mathematical theory
  of shock waves, in: CBMS Regional Conference Series in Applied Mathematics,
  Vol.~11, SIAM, 1973.

\bibitem{harten1983}
A.~Harten, On the symmetric form of systems of conservation laws with entropy,
  J. Comput. Phys. 49 (1983) 151--164.

\bibitem{carpenter1994time}
M.~H. Carpenter, D.~Gottlieb, S.~Abarbanel, Time-stable boundary conditions for
  finite-difference schemes solving hyperbolic systems: {M}ethodology and
  application to high-order compact schemes, J. Comput. Phys. 111~(2) (1994)
  220--236.

\bibitem{nordstrom_roadmap}
J.~Nordstr\"{o}m, A roadmap to well posed and stable problems in computational
  physics, {J. Sci. Comput.} 71~(1) (2017) 365--385.

\bibitem{hindenlang2019}
F.~J. Hindenlang, G.~J. Gassner, D.~A. Kopriva, Stability of wall boundary
  condition procedures for discontinuous {G}alerkin spectral element
  approximations of the compressible {E}uler equations, arXiv preprint
  arXiv:1901.04924 (2019).

\bibitem{leveque1998}
R.~J. LeVeque, Balancing source terms and flux gradients in high-resolution
  {G}odunov methods: {T}he quasi-steady wave-propagation algorithm, J. Comput.
  Phys. 146~(1) (1998) 346--365.

\bibitem{ESDGSEM2D_paper}
N.~Wintermeyer, A.~R. Winters, G.~J. Gassner, D.~A. Kopriva, An entropy stable
  nodal discontinuous {G}alerkin method for the two dimensional shallow water
  equations on unstructured curvilinear meshes with discontinuous bathymetry,
  J. Comput. Phys. 340 (2017) 200--242.

\bibitem{xing2014}
Y.~Xing, Exactly well-balanced discontinuous {G}alerkin methods for the shallow
  water equations with moving water equilibrium, J. Comput. Phys. 257 (2014)
  536--553.

\bibitem{dalcin2019conservative}
L.~Dalcin, D.~Rojas, S.~Zampini, D.~C. D.~R. Fern{\'a}ndez, M.~H. Carpenter,
  M.~Parsani, Conservative and entropy stable solid wall boundary conditions
  for the compressible {N}avier--{S}tokes equations: {A}diabatic wall and heat
  entropy transfer, J. Comput. Phys. 397 (2019) 108775.

\bibitem{dubois1988}
F.~Dubois, P.~LeFloch, Boundary conditions for nonlinear hyperbolic systems of
  conservation laws, Journal of Differential Equations 71~(1) (1988) 93--122.

\bibitem{parsani2015entropy}
M.~Parsani, M.~H. Carpenter, E.~J. Nielsen, Entropy stable wall boundary
  conditions for the three-dimensional compressible {N}avier--{S}tokes
  equations, J. Comput. Phys. 292 (2015) 88--113.

\bibitem{svard2012}
M.~Sv{\"a}rd, S.~Mishra, Entropy stable schemes for initial-boundary-value
  conservation laws, Z. Angew. Math. Phys. 63~(6) (2012) 985--1003.

\bibitem{svard2014}
M.~Sv{\"a}rd, H.~{\"O}zcan, Entropy-stable schemes for the {E}uler equations
  with far-field and wall boundary conditions, J. Sci. Comput. 58~(1) (2014)
  61--89.

\bibitem{svard2021entropy}
M.~Sv{\"a}rd, Entropy stable boundary conditions for the {E}uler equations,
  Journal of Computational Physics 426 (2021) 109947.

\bibitem{svar2018}
M.~Sv\"{a}rd, M.~H. Carpenter, M.~Parsani, Entropy stability and the no-slip
  wall boundary condition, SIAM Journal on Numerical Analysis 56~(1) (2018)
  256--273.

\bibitem{svard2021}
M.~Sv{\"a}rd, Entropy stable boundary conditions for the {E}uler equations,
  Journal of Computational Physics 426 109947 (2021).

\bibitem{chan2022}
J.~Chan, Y.~Lin, T.~Warburton, Entropy stable modal discontinuous {G}alerkin
  schemes and wall boundary conditions for the compressible {N}avier-{S}tokes
  equations, Journal of Computational Physics 448 110723 (2022).

\bibitem{Gjesteland2022}
A.~Gjesteland, M.~Svärd, Entropy stability for the compressible
  {N}avier-{S}tokes equations with strong imposition of the no-slip boundary
  condition, Journal of Computational Physics 470 111572 (2022).

\bibitem{MR436612}
B.~Engquist, A.~Majda, Absorbing boundary conditions for the numerical
  simulation of waves, Math. Comp. 31~(139) (1977) 629--651.

\bibitem{MR1339182}
J.~Nordstr\"{o}m, The use of characteristic boundary conditions for the
  {N}avier-{S}tokes equations, Comput. \& Fluids 24~(5) (1995) 609--623.

\bibitem{MR1669660}
J.~Nordstr\"{o}m, M.~H. Carpenter, Boundary and interface conditions for
  high-order finite-difference methods applied to the {E}uler and
  {N}avier-{S}tokes equations, J. Comput. Phys. 148~(2) (1999) 621--645.

\bibitem{Petr2007}
P.~N. Vabishchevich, On the form of the hydrodynamics equations, in: West-East
  High Speed Flow Field Conference, Moscow, Russia, 19–22 November 2007.

\bibitem{Rozema2014}
W.~Rozema, J.~Kok, R.~Verstappen, A.~Veldman, A symmetry-preserving
  discretisation and regularisation model for compressible flow with
  application to turbulent channel flow, Journal of Turbulence 15~(6) (2014)
  386--410.

\bibitem{Sesterhenn2014}
J.~Reiss, J.~Sesterhenn, A conservative, skew-symmetric finite difference
  scheme for the compressible {N}avier–{S}tokes equations, Computers and
  Fluids 101 (2014) 208--219.

\bibitem{Halpern2018}
F.~D. Halpern, R.~E. Waltz, Anti-symmetric plasma moment equations with
  conservative discrete counterparts, Physics of Plasmas 25~(6) (2018) 060703.

\bibitem{svard2014review}
M.~Sv{\"a}rd, J.~Nordstr{\"o}m, Review of summation-by-parts schemes for
  initial--boundary-value problems, Journal of Computational Physics 268 (2014)
  17--38.

\bibitem{fernandez2014review}
D.~C. D.~R. Fern{\'a}ndez, J.~E. Hicken, D.~W. Zingg, Review of
  summation-by-parts operators with simultaneous approximation terms for the
  numerical solution of partial differential equations, Computers \& Fluids 95
  (2014) 171--196.

\bibitem{Arnold20011749}
D.~Arnold, F.~Brezzi, B.~Cockburn, L.~Donatella~Marini, Unified analysis of
  discontinuous {G}alerkin methods for elliptic problems, SIAM Journal on
  Numerical Analysis 39~(5) (2001) 1749--1779.

\bibitem{kopriva2021}
D.~Kopriva, G.~Gassner, J.~Nordstr{\"o}m, Stability of discontinuous {G}alerkin
  spectral element schemes for wave propagation when the coefficient matrices
  have jumps, Journal of Scientific Computing 88~(1) (2021) 3.

\bibitem{nordstrom2009stable}
J.~Nordstr{\"o}m, J.~Gong, E.~Van~der Weide, M.~Sv{\"a}rd, A stable and
  conservative high order multi-block method for the compressible
  {N}avier--{S}tokes equations, Journal of Computational Physics 228~(24)
  (2009) 9020--9035.

\bibitem{svard2007stable}
M.~Sv{\"a}rd, M.~H. Carpenter, J.~Nordstr{\"o}m, A stable high-order finite
  difference scheme for the compressible {N}avier--{S}tokes equations,
  far-field boundary conditions, Journal of Computational Physics 225~(1)
  (2007) 1020--1038.

\bibitem{svard2008stable}
M.~Sv{\"a}rd, J.~Nordstr{\"o}m, A stable high-order finite difference scheme
  for the compressible {N}avier--{S}tokes equations: No-slip wall boundary
  conditions, Journal of Computational Physics 227~(10) (2008) 4805--4824.

\bibitem{nordstrom2012weak}
J.~Nordstr{\"o}m, S.~Eriksson, P.~Eliasson, Weak and strong wall boundary
  procedures and convergence to steady-state of the {N}avier--{S}tokes
  equations, Journal of Computational Physics 231~(14) (2012) 4867--4884.

\bibitem{nordstrom2003finite}
J.~Nordstr{\"o}m, K.~Forsberg, C.~Adamsson, P.~Eliasson, Finite volume methods,
  unstructured meshes and strict stability for hyperbolic problems, Applied
  Numerical Mathematics 45~(4) (2003) 453--473.

\bibitem{carpenter2014entropy}
M.~H. Carpenter, T.~C. Fisher, E.~J. Nielsen, S.~H. Frankel, Entropy stable
  spectral collocation schemes for the {N}avier--{S}tokes equations:
  Discontinuous interfaces, SIAM Journal on Scientific Computing 36~(5) (2014)
  B835--B867.

\bibitem{carpenter1996spectral}
M.~H. Carpenter, D.~Gottlieb, Spectral methods on arbitrary grids, Journal of
  Computational Physics 129~(1) (1996) 74--86.

\bibitem{castonguay2013energy}
P.~Castonguay, D.~M. Williams, P.~E. Vincent, A.~Jameson, Energy stable flux
  reconstruction schemes for advection--diffusion problems, Computer Methods in
  Applied Mechanics and Engineering 267 (2013) 400--417.

\bibitem{huynh2007flux}
H.~T. Huynh, A flux reconstruction approach to high-order schemes including
  discontinuous {G}alerkin methods, in: 18th AIAA Computational Fluid Dynamics
  Conference, 2007, p. 4079.

\bibitem{gassner2013skew}
G.~J. Gassner, A skew-symmetric discontinuous {G}alerkin spectral element
  discretization and its relation to {SBP-SAT} finite difference methods, SIAM
  Journal on Scientific Computing 35~(3) (2013) A1233--A1253.

\bibitem{hesthaven1996stable}
J.~S. Hesthaven, D.~Gottlieb, A stable penalty method for the compressible
  {N}avier--{S}tokes equations: I. {O}pen boundary conditions, SIAM Journal on
  Scientific Computing 17~(3) (1996) 579--612.

\bibitem{abgrall2020analysis}
R.~Abgrall, J.~Nordstr{\"o}m, P.~{\"O}ffner, S.~Tokareva, Analysis of the
  {SBP-SAT} stabilization for finite element methods part i: linear problems,
  Journal of Scientific Computing 85~(2) (2020) 1--29.

\bibitem{abgrall2021analysis}
R.~Abgrall, J.~Nordstr{\"o}m, P.~{\"O}ffner, S.~Tokareva, Analysis of the
  {SBP-SAT} stabilization for finite element methods part ii: entropy
  stability, Communications on Applied Mathematics and Computation 5 (2023)
  573–595.

\bibitem{nordstrom2020}
J.~Nordstr\"{o}m, T.~M. Hagstrom, The number of boundary conditions for initial
  boundary value problems, SIAM Journal on Numerical Analysis 58~(5) (2020)
  2818--2828.

\bibitem{Strikwerda1977797}
J.~Strikwerda, Initial boundary value problems for incompletely parabolic
  systems, Commun. Pure Appli. Math. 30~(6) (1977) 797--822.

\bibitem{Leveque}
R.~J. LeVeque, Numerical Methods for Conservation Laws, Birkhäuser, 1992.

\bibitem{Godlewski}
E.~Godlewski, P.-A. Raviart, Numerical Approximation of Hyperbolic Systems of
  Conservation Laws, Springer, 1996.

\bibitem{MR4132906}
A.~R. Winters, C.~Czernik, M.~B. Schily, G.~J. Gassner, Entropy stable
  numerical approximations for the isothermal and polytropic {E}uler equations,
  BIT 60~(3) (2020) 791--824.

\bibitem{nordstrom2021linear}
J.~Nordström, A.~R. Winters, A linear and nonlinear analysis of the shallow
  water equations and its impact on boundary conditions, Journal of
  Computational Physics 463 111254 (2022).

\bibitem{horn2012}
R.~A. Horn, C.~R. Johnson, Matrix Analysis, Cambridge University Press, 2012.

\bibitem{nordstrom2020spatial}
J.~Nordstr{\"o}m, F.~Laur{\'e}n, The spatial operator in the incompressible
  {N}avier--{S}tokes, {O}seen and {S}tokes equations, Computer Methods in
  Applied Mechanics and Engineering 363 (2020) 112857.

\bibitem{Lundquist201849}
T.~Lundquist, A.~Malan, J.~Nordström, A hybrid framework for coupling
  arbitrary summation-by-parts schemes on general meshes, Journal of
  Computational Physics 362 (2018) 49 – 68.

\bibitem{shallowwaterbook}
C.~B. Vreugdenhil, Numerical Methods for Shallow-Water Flow, Vol.~13, Springer,
  2013.

\bibitem{whitham1974}
G.~B. Whitham, Linear and Nonlinear Waves, JWS, 1974.

\end{thebibliography}

\end{document}